\newtheorem{theorem}{Theorem}[section]
\newtheorem{prop}[theorem]{Proposition}
\newtheorem{lemma}[theorem]{Lemma}
\newtheorem{conjecture}[theorem]{Conjecture}
\newcounter{mathitem}
  {\begin{list}{{$(\roman{mathitem})$}}{
   \setcounter{mathitem}{0}
   \usecounter{mathitem}
   \setlength{\topsep}{0pt plus 2pt minus 0pt}
   \setlength{\parskip}{0pt plus 2pt minus 0pt}
   \setlength{\partopsep}{0pt plus 2pt minus 0pt}
   \setlength{\parsep}{0pt plus 2pt minus 0pt}
   \setlength{\leftmargin}{35pt}
   \setlength{\itemsep}{0pt plus 2pt minus 0pt}}}
  {\end{list}}
\begin{document}

\title{ The  Randi\'{c} index and signless Laplacian spectral radius  of graphs}
\author{Bo Ning
and  Xing Peng\footnote{Corresponding author.}\\[2mm]
\small Center for Applied Mathematics\\
\small Tianjin University, Tianjin,  300062 P. R. China\\
\small E-mail: bo.ning@tju.edu.cn; x2peng@tju.edu.cn}
\date{}
\maketitle

\begin{abstract}
 Given a connected graph $G$, the Randi\'c index $R(G)$ is  the sum of $\tfrac{1}{\sqrt{d(u)d(v)}}$ over all edges $\{u,v\}$ of $G$, where $d(u)$ and  $d(v)$ are the degree of  vertices $u$ and $v$ respectively. Let $q(G)$ be the largest eigenvalue of the singless Laplacian matrix of $G$ and  $n=|V(G)|$.
Hansen and Lucas (2010)   made the following conjecture:
\[
\frac{q(G)}{R(G)} \leq
\begin{cases}
  \frac{4n-4}{n} & 4 \leq n\leq 12\\
  \frac{n}{\sqrt{n-1}} & n\geq 13
   \end{cases}
\]
with equality if and only if $G=K_{n}$ for $4\leq n\leq 12$ and $G=S_n$ for $n\geq 13$, respectively. Deng, Balachandran, and Ayyaswamy ({\em J.~Math.~Anal.~Appl.} 2014)  verified this conjecture for $4 \leq n \leq 11$.  In this paper, we solve this conjecture completely.
\end{abstract}

\noindent
{\bf Mathematics Subject Classification (2010):} 05C50, 05C90

\noindent
 {\bf Keywords:} Randi\'c index, singless Laplacian matrix,  eigenvalue

\section{Introduction}
For a connected graph $G=(V,E)$, the {\it Randi{\'c} index}  $R(G)$ is  defined as
\[
R(G)=\sum_{\{u,v\} \in E(G)} \frac{1}{\sqrt{d(u)d(v)}},
\]
where $d(u)$  and $d(v)$ are the degree of vertices $u$ and $v$  respectively.   This parameter was introduced by the   chemist Milan Randi{\'c} \cite{Randic} in 1975 under the name `branching index'.  Originally, it was used  to  measure the extent of branching of the carbon-atom skeleton of saturated hydrocarbons.  It was   noticed that there is a good correlation between the Randi{\'c} index and several physico-chemical properties of alkanes: for example, boiling points, enthalpies of formation, chromatographic retention times, etc. \cite{HKM,KH,KHMR}.

From the view of extremal graph theory, one may ask what are the minimum and maximum values of the  Randi{\'c} index among a certain class of graphs and which graphs from the given class of graphs attain the extremal values.  Bollob\'as and   Erd\H{o}s \cite{BE}  first considered this kind of question. They proved that $R(G) \geq \sqrt{n-1}$ for each graph with $n$ vertices and without isolated vertices. Moreover, the equality holds if and only $G$ is the star. After that, there are a lot of references in  this vein, for example, \cite{BBG, BEA, DelFavRau,GaoLu}.   Bollob\'as, Erd\H{o}s, and Sarkar \cite{BEA} studied  generalizations of the Randi{\'c} index.

Another  direction of research is to ask the relationships between the  Randi{\'c} index and other parameters of graphs.  Hansen and Vukicevi\'c \cite{HansenV} studied the connections between
the Randi\'{c} index and the chromatic number of graphs.  Aouchiche, Hansen,  and Zheng \cite{AHZ}  made  a conjecture on the minimum values of $\tfrac{R(G)}{D(G)}$ and $R(G)-D(G)$ over all connected graphs with the same number of vertices, where $D(G)$ is the diameter of $G$.   Li and Shi \cite{LS}  as well as  Dvo{\v r}{\'a}k, Lidick{\'y}, and {\v S}krekovski \cite{DLS} studied this conjecture  before    Yang and Lu \cite{YangLu} finally  resolved it.  Another result is
$\lambda_1(G) \geq \tfrac{e(G)}{R(G)}$ which was proved by Favaron,  Mah\'eo, and Sacl\'e \cite{FMS}. Here $\lambda_1(G)$ is the largest eigenvalue of the adjacency matrix of $G$.  One may ask to prove  similar results involving  the  Randi{\'c} index and the spectral radius of other matrices associated with a graph.

For a graph $G$, the {\it singless Laplacian matrix} $Q$ is defined as $D+A$, where $D$ is the diagonal matrix of degrees in $G$ and $A$ is the adjacency matrix of $G$.  Let $q(G)$ be the largest eigenvalue of $Q$. With the aid of AutoGraphiX system, Hansen and Lucas \cite{HansenLucas} proposed the following two conjectures.  The first one is on the difference between $q(G)$ and $R(G)$. More precisely, they conjectured that  if  $G$ is a connected graph on $n \geq 4$ vertices, then $q(G)-R(G)\leq \tfrac{3n}{2}-2$ and equality holds for $G=K_n$.  This conjecture was proved by  Deng, Balachandran, and Ayyaswamy \cite{DengBalAyy}. The second one concerns the ratio of $q(G)$ to $R(G)$.
\begin{conjecture}[Hansen and Lucas \cite{HansenLucas}]\label{ConjHL2}
Let $G$ be a connected graph on $n\geq 4$ vertices with the largest signless Laplacian eigenvalue $q(G)$ and Randi\'c index $R(G)$.
Then
\[
\frac{q(G)}{R(G)} \leq
\begin{cases}
  \frac{4n-4}{n} & 4 \leq  n\leq 12\\
  \frac{n}{\sqrt{n-1}} & n\geq 13
   \end{cases}
\]
with equality if and only if $G=K_{n}$ for $4\leq n\leq 12$ and $G=S_n$ for $n\geq 13$, respectively.
\end{conjecture}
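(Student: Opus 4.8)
The plan is to combine sharp upper bounds for $q(G)$ with the Bollob\'as--Erd\H{o}s bound $R(G)\ge\sqrt{n-1}$ and refinements of it, organising everything by the maximum degree $\Delta=\Delta(G)$. First I would dispose of $4\le n\le 11$ by citing \cite{DengBalAyy}, so that only $n\ge 12$ remains; set $c_n=\max\{\tfrac{4(n-1)}{n},\tfrac{n}{\sqrt{n-1}}\}$, which is $\tfrac{4(n-1)}{n}$ for $n\le12$ and $\tfrac{n}{\sqrt{n-1}}$ for $n\ge13$, so that it suffices to prove $q(G)\le c_nR(G)$ with equality only at $K_{12}$ (if $n=12$) or $S_n$ (if $n\ge13$). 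The upper bounds for $q(G)$ I would use are: $q(G)\le2\Delta$; the Merris-type bound $q(G)\le\max_v\{d(v)+m(v)\}$ with $m(v)=\tfrac1{d(v)}\sum_{u\sim v}d(u)$; $q(G)\le\max_{uw\in E}\{d(u)+d(w)\}$ (via the line graph); and $q(G)\le\tfrac{2e(G)}{n-1}+n-2$, which for connected graphs is an equality exactly for $K_n$ and $S_n$. The lower bounds for $R(G)$ I would use are $R(G)\ge\sqrt{n-1}$, the trivial $R(G)\ge e(G)/\Delta$, the chain $R(G)\ge\tfrac{2e(G)^2}{\sum_v d(v)^2}\ge\tfrac{2e(G)(n-1)}{2e(G)+(n-1)(n-2)}$ coming from AM--GM, Cauchy--Schwarz and de Caen's inequality $\sum_v d(v)^2\le e(G)\bigl(\tfrac{2e(G)}{n-1}+n-2\bigr)$ (an equality for $K_n$), and the local estimate that, for a vertex $v$ of degree $\Delta$, the edges at $v$ contribute at least $\sqrt{\Delta/m(v)}$ to $R(G)$ (Jensen for $t\mapsto t^{-1/2}$ on the neighbour-degrees), while $\sum_{u\sim v}d(u)=\Delta\,m(v)$ forces at least $\tfrac12\Delta\bigl(m(v)-1\bigr)$ further edges in $G$. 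The case $\Delta\le n/2$ is then immediate: $q(G)\le2\Delta\le n\le c_n\sqrt{n-1}\le c_nR(G)$, and here the inequality is strict, since equality would force $R(G)=\sqrt{n-1}$, i.e.\ $G=S_n$, which has $\Delta=n-1$. So assume $\Delta>n/2$.

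When $G$ is dense, combining $q(G)\le\tfrac{2e(G)}{n-1}+n-2$ with $R(G)\ge\tfrac{2e(G)(n-1)}{2e(G)+(n-1)(n-2)}$ yields $\tfrac{q(G)}{R(G)}\le g\bigl(e(G)\bigr)$, where $g(m)=\tfrac{(2m+(n-1)(n-2))^2}{2m(n-1)^2}$ is convex in $m$, equals $\tfrac{4(n-1)}{n}$ at $m=\binom n2$, and is minimised at $m=\tfrac{(n-1)(n-2)}{2}$. Hence $g(m)\le c_n$ on $[m_0,\binom n2]$, where $m_0$ is the smaller root of $g(m)=c_n$ and satisfies $m_0=O(n^{3/2})$ (for $n=12$ one finds $m_0\approx46$, the larger root being exactly $\binom{12}{2}=66$, so that equality occurs precisely at $K_{12}$). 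This settles every connected $G$ with $e(G)\ge m_0$, with equality only at $K_n$, which forces $n\le12$.

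It remains to treat $\Delta>n/2$ together with $e(G)<m_0$ — the regime in which $S_n$ and the near-star graphs live. Fix a vertex $v$ of degree $\Delta$. From $\Delta\,m(v)=\sum_{u\sim v}d(u)\le 2e(G)<2m_0$ we get $m(v)<4m_0/n$, so $m(v)$ is small: in particular $\sqrt{\Delta/m(v)}$, the contribution of the edges at $v$ to $R(G)$, is already substantial, and it is combined with the contribution of the $\ge\tfrac12\Delta(m(v)-1)$ remaining edges (whose endpoints, having average degree $m(v)$ in $v$'s neighbourhood, are mostly of small degree, so these edges are individually far from negligible). Matching the resulting lower bound for $R(G)$ against $q(G)\le\max_v\{d(v)+m(v)\}$ reduces the claim to an elementary estimate in a few bounded parameters, and the boundary case $m(v)=1$ forces every neighbour of $v$ to be a leaf, hence $\Delta=n-1$ and $G=S_n$ by connectedness — the unique equality case for $n\ge13$. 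The equality bookkeeping throughout is clean because $q(G)<\tfrac{2e(G)}{n-1}+n-2$ unless $G\in\{K_n,S_n\}$, and those two graphs are checked by hand.

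I expect this last regime to be the real difficulty: the genuinely intermediate graphs, where $m(v)$, $e(G)$ and the second-largest degree $\Delta_2(G)$ are all moderate so that none of the listed estimates is close to tight, together with the need to choose $m_0$ and the local bounds so that the dense and non-dense cases overlap without a gap. In effect one needs a quantitative (stability) form of Bollob\'as--Erd\H{o}s, something like $R(G)\ge\sqrt{n-1}+\tfrac{2(e(G)-(n-1))}{n\sqrt{n-1}}$ for $n\ge13$, expressing that a connected graph with appreciably more edges than a spanning tree is correspondingly far, in Randi\'c index, from the star; I would try to establish it by accounting over a spanning tree together with the excess edges, or by a finer analysis of the degree sequence near a maximum-degree vertex that tracks exactly when the intermediate estimates are simultaneously tight. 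Finally, $n=12$ is the transition point, where the extremal graph switches from $S_n$ to $K_n$ and $c_{12}=\tfrac{44}{12}$; there the last regime must be re-run with this slightly smaller constant, which is small enough to be pushed through — if necessary, partly by finite verification.
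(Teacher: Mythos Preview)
Your architecture matches the paper's: split by $\Delta$ and by $e(G)$, reduce everything to the stability inequality $R(G)>\sqrt{n-1}+\tfrac{2(e(G)-n+1)}{n\sqrt{n-1}}$ (which, via $q(G)\le\tfrac{2m}{n-1}+n-2$, is exactly the paper's Lemma~2.4), and handle $\Delta\le n/2$ and the very dense range trivially. You also correctly isolate the hard regime as ``$\Delta$ large, $e(G)$ only slightly above $n-1$''.

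The gap is that you do not actually prove the stability inequality there, and the tools you list for it are not the ones that work. Your Jensen estimate gives only $\sqrt{\Delta/m(v)}\approx\sqrt{n-1}-\tfrac{k}{\sqrt{n-1}}$ when $\Delta=n-1$ and $e(G)=n+k$, and the $\approx k$ remaining edges, each worth at least $\tfrac{1}{n-1}$, do not recover the deficit of order $k/\sqrt{n-1}$; ``accounting over a spanning tree'' is not made precise. The paper closes this regime with three ingredients you do not mention: the Delorme--Favaron--Rautenbach bound $R(G)\ge\sqrt{2(n-1)}+\tfrac{1}{n-1}-\sqrt{\tfrac{2}{n-1}}$ whenever $\delta(G)\ge2$; the Hansen--Vukicevi\'c deletion inequality $R(G)-R(G-v)\ge\tfrac12\sqrt{\delta/\Delta}$, applied repeatedly to strip off degree-one vertices until $\delta\ge2$; and an induction on $n$ (with computer-verified base cases at $n=12,13$) that propagates the stability inequality when $\Delta\in\{n-1,n-2,n-3\}$ and $e(G)\le n+10$. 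For $n/2\le\Delta\le n-4$ with $e(G)\le n+10$ (and the analogous small-$k$ subranges at $\Delta=n-3,n-2$) the Merris-type bound already gives $q(G)\le n$, so Bollob\'as--Erd\H os alone suffices there.

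A second, smaller discrepancy: for the medium-density range the paper does not use de~Caen but rather $R(G)\ge m/\lambda_1\ge m/\sqrt{2m-n+1}$ (Favaron--Mah\'eo--Sacl\'e combined with Hong). This is strictly sharper than your $R(G)\ge\tfrac{2m(n-1)}{2m+(n-1)(n-2)}$ in the sparse range and lets the paper push the ``dense'' argument down to $m\ge n+8$ (for $n\ge15$), leaving only $n+1\le e(G)\le n+10$ for the delicate case analysis, whereas your $m_0\approx46$ at $n=12$ leaves a much wider gap to fill.
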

 Deng, Balachandran, and  Ayyaswamy \cite{DengBalAyy} were able to prove this conjecture for $4 \leq n \leq 11$ and established a nontrivial  upper bound on $\tfrac{q(G)}{R(G)}$ which is larger than the conjectured one.  We solve this conjecture completely in this paper. Namely, we prove the following theorem.
 \begin{theorem} \label{main}
 For a connected graph $G$ with $n$ vertices, we have
 \[
\frac{q(G)}{R(G)} \leq
\begin{cases}
  \frac{11}{3} & n= 12;\\
  \frac{n}{\sqrt{n-1}} & n\geq 13.
   \end{cases}
\]
The  equality holds if and only if  $G=K_{12}$ for $n=12$ and $G=S_n$ for $n\geq 13$.
  \end{theorem}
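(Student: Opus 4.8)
My plan is to reduce the problem to a one–variable inequality via three standard estimates, and then dispose of the finitely many cases the reduction leaves open. Write $m=|E(G)|$ and let $\lambda_1=\lambda_1(G)$ be the adjacency spectral radius. I would use: (a) the signless Laplacian bound $q(G)\le\frac{2m}{n-1}+n-2$, which is sharp only for $G=K_n$ and $G=S_n$; (b) the Favaron--Mah\'eo--Sacl\'e inequality $\lambda_1\ge m/R(G)$, i.e.\ $R(G)\ge m/\lambda_1$; and (c) Hong's bound $\lambda_1\le\sqrt{2m-n+1}$ for connected graphs. Combining them yields, for $n-1\le m\le\binom n2$,
\[
\frac{q(G)}{R(G)}\le f(m):=\frac1m\Bigl(\tfrac{2m}{n-1}+n-2\Bigr)\sqrt{2m-n+1}.
\]
A direct computation gives $f(n-1)=\tfrac n{\sqrt{n-1}}$ and $f\bigl(\binom n2\bigr)=\tfrac{4n-4}{n}$, and one checks that on $[\,n-1,\binom n2\,]$ the function $f$ increases, then decreases, then increases. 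Since $\tfrac{4n-4}{n}<\tfrac n{\sqrt{n-1}}$ exactly for $n\ge13$, this shape forces, for $n\ge13$, that $f(m)\le\tfrac n{\sqrt{n-1}}$ for every integer $m$ except those in a bounded ``bad interval'' just above $n-1$, with equality only at $m=n-1$; for $n=12$ it forces $f(m)\le\tfrac{11}3$ for every integer $m$ outside a similar bad interval, with equality only at $m=\binom{12}2$. Hence it remains to treat $m=n-1$, the finitely many $m$ in the bad interval, and (for $n=12$) the value $m=\binom{12}2$; for all other $m$ the inequality is strict, so no new equality case arises.

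The value $m=\binom{12}2$ forces $G=K_{12}$, for which (a), (b), (c) are all tight, so $q/R=11/3$. When $m=n-1$ the graph $G$ is a tree and I would bypass $f$ entirely: in a tree the neighbourhoods of the two endpoints of an edge are disjoint, so $q(T)\le\max_{uv\in E(T)}(d(u)+d(v))\le n$, while $R(T)\ge\sqrt{n-1}$ with equality iff $T=S_n$ (Bollob\'as--Erd\H{o}s). Thus $q(T)/R(T)\le n/R(T)\le\tfrac n{\sqrt{n-1}}$ with equality iff $T=S_n$; for $n=12$ this even gives the strict bound $q(T)/R(T)\le 12/\sqrt{11}<11/3$.

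The technical core is the bad interval: these are connected graphs whose cyclomatic number $k=m-n+1$ is a fixed small positive integer (one verifies $k$ stays bounded, roughly $k\le 12$, and shrinks as $n$ grows). Here (a) is wasteful, so I would instead use the sharper bound $q(G)\le\max_v\bigl(d(v)+m(v)\bigr)$, where $m(v)=\tfrac1{d(v)}\sum_{u\sim v}d(u)$ is the average degree of the neighbours of $v$, together with a quantitative strengthening of Bollob\'as--Erd\H{o}s. I would split on whether $G$ has a vertex $v$ of degree $n-1$. If it does, write $G=v\vee H$ with $|E(H)|=k$; then $\Delta(H)\le k$ and $m(v)=1+\tfrac{2k}{n-1}$, and one shows $q(G)$ exceeds $n$ only slightly (also bounding $d(u)+m(u)$ for $u\ne v$, with a handful of boundary pairs $(n,k)$ handled by hand), while expanding $R(v\vee H)$ along the degree sequence of $H$ produces $R(G)\ge\sqrt{n-1}+s$ for an explicit surplus $s>0$ large enough to win. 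If instead $\Delta(G)\le n-2$, one uses the crude $q\le2\Delta$, $R\ge m/\Delta$ when $\Delta$ is small, and $q\le\max_v(d(v)+m(v))$ with $R\ge\sqrt{n-1}$ when $\Delta$ is close to $n-2$ (the top vertex then has mostly degree–one and degree–two neighbours, so $d(v)+m(v)$ exceeds $n-1$ by very little). Every subcase is strict.

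I expect the main obstacle to be exactly this bad–interval analysis: one must pin down the precise finite list of pairs $(n,m)$ not already covered by $f$, and then, for each, rule out \emph{every} connected $n$-vertex $m$-edge graph, which requires genuinely combinatorial control of both $q(G)$ and $R(G)$ on very sparse graphs; making the estimates uniform enough that the small values of $n$ (roughly $12\le n\le 25$) do not each demand an ad hoc argument is the delicate point. The case $n=12$ runs along the same lines, the only structural change being that the extremal configuration migrates from the star to $K_{12}$; once its bad interval is cleared, Theorem \ref{main} follows.
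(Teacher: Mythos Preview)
Your overall architecture is exactly the paper's: combine Feng--Yu's $q\le\tfrac{2m}{n-1}+n-2$, Hong's $\lambda_1\le\sqrt{2m-n+1}$, and Favaron--Mah\'eo--Sacl\'e's $R\ge m/\lambda_1$ into a one--variable bound in $m$; observe this handles all $m$ except a short ``bad interval'' of small cyclomatic number; treat trees (and unicyclic graphs) separately; and attack the bad interval with $q\le\max_v(d(v)+m(v))$ together with a sharpening of $R\ge\sqrt{n-1}$. The paper even cuts the large--$m$ range at $2n^{3/2}$ and uses $R\ge m/(n-1)$ there, but your shape analysis of $f$ would subsume that. So the reduction step is fine.

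The genuine gap is in your bad--interval analysis, and it is precisely where the paper does the real work. Your dichotomy $\Delta=n-1$ versus $\Delta\le n-2$ is too coarse. Concretely, take $n=13$, $k=10$, $\Delta=n-2=11$: then the top vertex $v_0$ has $d(v_0)+m(v_0)$ as large as $11+\tfrac{34}{11}>14$, so $q$ can exceed $n$ by more than $1$, while Bollob\'as--Erd\H{o}s only gives $R\ge\sqrt{12}$; the resulting ratio bound is about $4.07$, well above $13/\sqrt{12}\approx 3.75$. So ``$d(v)+m(v)$ exceeds $n-1$ by very little'' and ``$R\ge\sqrt{n-1}$'' together do \emph{not} close the case $\Delta\in\{n-2,n-3\}$ with $k$ near the top of the range. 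The paper resolves this with two extra ingredients you do not invoke: (i) the Delorme--Favaron--Rautenbach bound $R(G)\ge\sqrt{2(n-1)}+\tfrac{1}{n-1}-\sqrt{\tfrac{2}{n-1}}$ whenever $\delta\ge2$, and (ii) an induction on $n$ that deletes a degree--one vertex and shows the quantitative target $R(G)>\sqrt{n-1}+\tfrac{2(k+1)}{n\sqrt{n-1}}$ is preserved (their Lemma~\ref{LeInduBas} and Proposition~\ref{delete}), with a hand--verified base case at $n=13$. This is what lets them cover $\Delta=n-1$ for all $1\le k\le10$, $\Delta=n-2$ for $5\le k\le10$, and $\Delta=n-3$ for $8\le k\le10$; the complementary ranges of $k$ (and all of $n/2\le\Delta\le n-4$) are exactly where $\max_v(d(v)+m(v))\le n$ does hold, and then $R>\sqrt{n-1}$ suffices. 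Your plan to ``expand $R(v\vee H)$ along the degree sequence of $H$'' for $\Delta=n-1$ might be made to work, but you would still need an analogue for $\Delta\in\{n-2,n-3\}$, and the paper's inductive route is what makes the estimates uniform in $n$ rather than ad hoc for each small $n$---which is precisely the difficulty you flagged.
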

For  developments of the Randi\'c index, we refer interested  readers to excellent surveys, for instance,  Li and Gutman \cite{LiGutman}, Li and Shi \cite{LiShi},  as well as Li, Shi, and Wang \cite{LiShiWang}.

We follow the standard notation throughout this paper. For those not defined here, we refer the
reader to Bondy and Murty \cite{BoM}. For a graph $G=(V, E)$, the {\it neighborhood} $N_G(v)$ of a vertex $v$ is the set $\{u: u \in V(G) \textrm{ and } \{u,v\} \in E(G)\}$ and
 the {\it degree} $d_G(v)$ of a vertex $v$ is $|N_G(v)|$. If the graph $G$ is clear in the context, then we will drop the subscript $G$.   We will use $e(G)$ to denote the number of edges in $G$.

The paper is organized as follows. In Section 2, we will collect several previous results which are needed in the proof of the  main theorem.
 Also, we will  prove a number of  technical lemmas in Section 2.   The proof of Theorem \ref{main} will be given in Section 3.

\section{Preliminaries}
We first recall two theorems which provide upper bounds for the largest eigenvalue of the adjacency matrix and the signless Laplacian matrix of a graph respectively.
\begin{theorem}[Hong \cite{Hong}]\label{ThHong}
Let $G$ be a graph with $n$ vertices and $m$ edges. Let $\lambda_1$ be the largest eigenvalue of its adjacency matrix.
If the minimum degree $\delta(G)\geq 1$, then
\[
\lambda_1 \leq \sqrt{2m-n+1}.
\]
\end{theorem}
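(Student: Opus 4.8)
The plan is to combine the Perron eigenvector with a two-step application of the eigenvalue equation, a technique that rewrites $\lambda_1^2$ as a weighted sum over walks of length two. Since $A$ is a nonnegative symmetric matrix, its largest eigenvalue $\lambda_1$ equals its spectral radius and admits a nonnegative eigenvector $x=(x_1,\dots,x_n)^T$. I would normalize $x$ so that its maximum entry equals $1$ and let $u$ be a vertex attaining this maximum, so that $x_u=1\geq x_j$ for every $j$. The reason for singling out the largest-entry vertex is that then every other coordinate is bounded above by $1$, which converts an exact identity into a clean combinatorial inequality.

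First I would record the eigenvalue equation at an arbitrary vertex $i$, namely $\lambda_1 x_i=\sum_{j\sim i} x_j$. Evaluating it at $u$ gives $\lambda_1=\lambda_1 x_u=\sum_{j\sim u} x_j$. Multiplying by $\lambda_1$ and substituting the eigenvalue equation once more at each neighbor $j$ yields
\[
\lambda_1^2 = \lambda_1\sum_{j\sim u} x_j=\sum_{j\sim u}\lambda_1 x_j=\sum_{j\sim u}\sum_{k\sim j} x_k .
\]
The double sum counts, with weight $x_k$, every walk $u\to j\to k$ of length two that starts at $u$. Using $x_k\leq x_u=1$ for all $k$, I can bound each weight by $1$, which gives $\lambda_1^2\leq \sum_{j\sim u}\sum_{k\sim j}1=\sum_{j\sim u} d(j)$; that is, $\lambda_1^2$ is at most the sum of the degrees of the neighbors of $u$.

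It then remains to show $\sum_{j\sim u} d(j)\leq 2m-n+1$, and this is exactly where the hypothesis $\delta(G)\geq 1$ enters. Partitioning the vertex set into $\{u\}$, the neighborhood $N(u)$, and the $n-1-d(u)$ remaining non-neighbors, the handshake identity $\sum_{v} d(v)=2m$ gives $\sum_{j\sim u} d(j)=2m-d(u)-\sum_{v\notin N(u)\cup\{u\}} d(v)$. Since each of the $n-1-d(u)$ non-neighbors has degree at least $\delta(G)\geq 1$, the last sum is at least $n-1-d(u)$, so $\sum_{j\sim u} d(j)\leq 2m-d(u)-(n-1-d(u))=2m-n+1$. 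Combining this with the previous paragraph yields $\lambda_1^2\leq 2m-n+1$, and taking square roots gives the claim. The main point to get right is the bookkeeping in this final counting step, together with the observation that $\delta(G)\geq 1$ is precisely what stops the non-neighbor degrees from vanishing; without it the method only delivers the weaker bound $\lambda_1^2\leq 2m-d(u)$. A minor technical remark is that when $G$ is disconnected one takes $x$ supported on the component realizing the spectral radius, so that $u$ and all the walks above remain inside a single component and the local argument is unaffected.
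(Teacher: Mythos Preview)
The paper does not supply its own proof of this statement: Theorem~\ref{ThHong} is quoted from Hong~\cite{Hong} as a preliminary tool and is used later (together with Theorem~\ref{Th}) to bound $R(G)$ from below via $R(G)\geq m/\lambda_1\geq m/\sqrt{2m-n+1}$. So there is no proof in the paper to compare against.

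That said, your argument is correct and is essentially Hong's original proof. The two-step eigenvalue identity $\lambda_1^2=\sum_{j\sim u}\sum_{k\sim j}x_k$ followed by the bound $x_k\le 1$ is exactly the standard route, and your degree-counting step $\sum_{j\sim u}d(j)=2m-d(u)-\sum_{v\notin N(u)\cup\{u\}}d(v)\le 2m-n+1$ is where the hypothesis $\delta(G)\ge 1$ is used, as you note. The remark about disconnected $G$ is also appropriate: one may either restrict to the component carrying the spectral radius, or simply observe that the Perron eigenvector can be taken nonnegative on all of $V(G)$ and the inequality $x_k\le 1$ still holds, while the degree bound for non-neighbors uses only $\delta(G)\ge 1$ globally.
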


\begin{theorem}[Feng and Yu \cite{FengYu}] \label{FY}
Let $G$ be a graph with $n$ vertices and $m$ edges. If $q(G)$
is the singless Laplacian spectral radius of $G$,  then
\begin{align}
q(G)\leq \frac{2m}{n-1}+n-2.
\end{align}
\end{theorem}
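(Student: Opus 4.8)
The plan is to combine the Rayleigh characterization of $q(G)$ with the signless Laplacian of the complement. Since $Q=D+A$ is positive semidefinite, let $x\ge 0$ be a unit Perron eigenvector, so that $q(G)=x^{\top}Qx=\sum_{\{u,v\}\in E(G)}(x_u+x_v)^2$. The starting point is the complement identity $Q(G)+Q(\bar G)=(n-2)I+J$, where $J$ is the all-ones matrix and $\bar G$ the complement (this follows from $\bar D=(n-1)I-D$ and $\bar A=J-I-A$). Evaluating it on $x$ and using $x^{\top}Jx=(\mathbf 1^{\top}x)^2$ gives $q(G)=(n-2)+s^2-x^{\top}Q(\bar G)x$, where $s:=\mathbf 1^{\top}x$. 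Because $Q(\bar G)\succeq 0$, this already yields the crude bound $q(G)\le n-2+s^2\le 2n-2$; the entire difficulty is to replace $x^{\top}Q(\bar G)x\ge 0$ by a sharp positive lower bound that introduces the edge count $m$.

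To bring in $m$, I would extract a linear relation from the eigen-equation $Qx=qx$: summing all coordinates gives $\mathbf 1^{\top}Qx=qs$ on one side and $\mathbf 1^{\top}Qx=2\sum_i d(i)x_i$ on the other, whence $\sum_i d(i)x_i=\tfrac12 qs$. Consequently the edge-sum over the complement is $\sum_{\{u,v\}\in E(\bar G)}(x_u+x_v)=\sum_i \bar d(i)x_i=(n-1)s-\tfrac12 qs=s(n-1-q/2)$, where $\bar d(i)=n-1-d(i)$; note this is nonnegative because the Gershgorin/row-sum bound gives $q\le 2\Delta\le 2(n-1)$. Writing $\bar m=\binom{n}{2}-m$ for the number of edges of $\bar G$ and applying Cauchy--Schwarz across those $\bar m$ terms, $x^{\top}Q(\bar G)x=\sum_{E(\bar G)}(x_u+x_v)^2\ge \frac{1}{\bar m}\big(\sum_{E(\bar G)}(x_u+x_v)\big)^2=\frac{s^2\,(n-1-q/2)^2}{\bar m}$. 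Substituting into the complement identity produces the implicit inequality $q\le (n-2)+s^2-\frac{s^2(n-1-q/2)^2}{\bar m}$, which I would then solve for $q$.

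The main obstacle is this last step: the inequality is self-referential in $q$ and still carries the unknown $s=\mathbf 1^{\top}x$. Viewed as a function of $q$ the right-hand side is a downward parabola (the $q^2$-coefficient being $-\tfrac{s^2}{4\bar m}$), so the inequality confines $q$ below the larger root, and the task becomes showing that root is at most $F:=\frac{2m}{n-1}+n-2=2n-2-\frac{2\bar m}{n-1}$ for every admissible $s$ with $0<s^2\le n$. After clearing denominators this should reduce to a polynomial inequality in $q$ and $s$; controlling $s$ — say by feeding back $s^2\le n$ together with the relation $\sum_i d(i)x_i=\tfrac12 qs$, or by a further Cauchy--Schwarz — is the delicate part that I expect to consume most of the work. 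The complete-graph case must be separated out, since there $\bar m=0$ makes the Cauchy--Schwarz step vacuous; but then $Q(\bar G)=0$ forces $q=n-2+s^2=2n-2=F$ directly. One could instead try the incidence-matrix reformulation $q(G)=\lambda_1(L(G))+2$ followed by Hong's estimate (Theorem \ref{ThHong}) applied to the line graph $L(G)$, but that route passes through $\sum_i d(i)^2$ and appears too lossy to recover the exact constant $F$, so the complement argument above seems the cleaner path; as a by-product, equality forces the Cauchy--Schwarz step to be tight, i.e. $x_u+x_v$ constant over $E(\bar G)$, which together with the $\bar m=0$ branch should single out the extremal graphs $K_n$ and $S_n$.
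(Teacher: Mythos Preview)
First, note that the paper does not prove Theorem~\ref{FY} at all: it is quoted from Feng and Yu \cite{FengYu} and used as a black box, so there is no ``paper's proof'' to compare against. Your task was therefore to supply an independent proof, and the question is whether your outline actually closes.

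It does not. The complement identity $q(G)=(n-2)+s^{2}-x^{\top}Q(\bar G)x$ is fine, and the Cauchy--Schwarz step
\[
x^{\top}Q(\bar G)x\;\ge\;\frac{s^{2}\bigl(n-1-q/2\bigr)^{2}}{\bar m}
\]
is correct, but it is too weak to force $q\le F:=\tfrac{2m}{n-1}+n-2$. Take $G$ $d$-regular with $d<n-1$. Then the Perron vector is $x=n^{-1/2}\mathbf 1$, so $s^{2}=n$, and your implicit inequality becomes
\[
q\;\le\;(n-2)+n-\frac{n}{\bar m}\Bigl(n-1-\tfrac{q}{2}\Bigr)^{2},
\]
whose two roots in $q$ are $q=2d$ and $q=2(n-1)$. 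The larger root is $2(n-1)$, which strictly exceeds $F=2(n-1)-\tfrac{2\bar m}{n-1}$ whenever $\bar m>0$. Thus the ``show the larger root is at most $F$ for every admissible $s$'' step fails already for every non-complete regular graph: your inequality only yields $q\le 2(n-1)$ there, not $q\le F$. The aside about ``feeding back $\sum_i d(i)x_i=\tfrac12 qs$'' is not a fix as written --- for regular $G$ that relation collapses to the tautology $q=2d$ and gives no leverage on $s$ in general --- so the gap is structural, not merely a matter of bookkeeping. In short, the Cauchy--Schwarz compression of $x^{\top}Q(\bar G)x$ throws away exactly the degree information needed to recover the $\tfrac{2m}{n-1}$ term, and you would need a genuinely different lower bound (or a different route entirely, e.g.\ the two-vertex argument $(q-d_u)(q-d_v)\le d_ud_v$ combined with a degree-sum estimate) to finish.
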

For a vertex $v$ of a graph $G$, we define $m(v)$ as $\tfrac{1}{d(v)} \sum_{u \in N(v)} d(u)$. For a certain class of graphs, the following theorem gives a better upper bound on $q(G)$.
\begin{theorem}[Feng and Yu \cite{FengYu}]\label{ThMerris}
For a connected graph $G$, we have
 \[
 q(G) \leq \max\{d(v)+m(v): v\in V(G)\}.
 \]
\end{theorem}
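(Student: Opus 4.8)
The plan is to bound the spectral radius of the signless Laplacian $Q = D + A$ by exhibiting a single well-chosen positive test vector and invoking the Collatz--Wielandt (Perron--Frobenius) estimate. Since $G$ is connected with $n \geq 2$, the adjacency matrix $A$ is irreducible, hence so is $Q = D + A$; moreover $Q$ is symmetric and entrywise nonnegative, so by the Perron--Frobenius theorem its spectral radius equals its largest eigenvalue $q(G)$ and is attained with a positive eigenvector. The general fact I would use is that for every entrywise nonnegative matrix $M$ and every strictly positive vector $y$,
\[
\rho(M) \leq \max_{v} \frac{(My)_v}{y_v}.
\]

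First I would record a short proof of this estimate for completeness. Let $z \geq 0$ be a nonzero left Perron vector with $z^{\top} M = \rho(M)\, z^{\top}$, and set $c = \max_v (My)_v / y_v$, so that $My \leq c\,y$ holds entrywise. Multiplying this inequality on the left by $z^{\top}$ gives $\rho(M)\, z^{\top} y = z^{\top} M y \leq c\, z^{\top} y$, and since $y > 0$ forces $z^{\top} y > 0$, we may cancel to conclude $\rho(M) \leq c$.

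The crucial step is the choice of test vector. I would take $y$ to be the degree vector, $y_v = d(v)$, which is strictly positive because $G$ is connected and hence every vertex has degree at least $1$. Evaluating the action of $Q$ on $y$ at a vertex $v$ gives
\[
(Qy)_v = d(v)\, y_v + \sum_{u \in N(v)} y_u = d(v)^2 + \sum_{u \in N(v)} d(u).
\]
By the definition $m(v) = \tfrac{1}{d(v)} \sum_{u \in N(v)} d(u)$, the remaining sum equals $d(v)\, m(v)$, so $(Qy)_v = d(v)\big(d(v) + m(v)\big)$ and therefore $(Qy)_v / y_v = d(v) + m(v)$.

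Combining the two estimates yields $q(G) = \rho(Q) \leq \max_v (Qy)_v / y_v = \max_v \{ d(v) + m(v) \}$, which is exactly the claimed bound. I do not expect a genuine obstacle here: the whole argument is driven by the single observation that the degree vector collapses the row quotient $(Qy)_v / y_v$ precisely to $d(v) + m(v)$. The only points needing care are verifying that $y$ is strictly positive (guaranteed by connectedness) and applying the Collatz--Wielandt inequality to the nonnegative matrix $Q$ itself rather than to $A$ alone.
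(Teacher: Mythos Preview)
Your argument is correct. The Collatz--Wielandt bound applied to the nonnegative irreducible matrix $Q=D+A$ with the degree vector as test vector is exactly the standard route to this inequality, and your computation $(Qy)_v=d(v)^2+\sum_{u\in N(v)}d(u)=d(v)\bigl(d(v)+m(v)\bigr)$ is clean and complete.

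As for comparison: the paper does not supply its own proof of this statement. It is quoted as an external result of Feng and Yu (and the bound is in fact a signless-Laplacian analogue of the Merris bound for the Laplacian), and is used later only as a black box in the proof of Lemma~\ref{largedegree}. So there is nothing in the paper to compare your proof against; you have simply filled in a valid proof where the paper gives a citation.
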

We will need the following lemma.
\begin{lemma}\label{LeBasic}
For a connected graph $G$ with $n \geq 4$ vertices,  if $m=e(G) \geq n$ and  $R(G) > \sqrt{n-1}+\tfrac{2m-2n+2}{n\sqrt{n-1}}$, then $\tfrac{q(G)}{R(G)} < \tfrac{n}{\sqrt{n-1}}$.
\end{lemma}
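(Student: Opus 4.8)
The plan is to combine the Feng--Yu bound (Theorem~\ref{FY}) with the hypothesis after a short algebraic rewriting that makes the two expressions match. By Theorem~\ref{FY} we have $q(G)\le \frac{2m}{n-1}+n-2$, so it suffices to show $\frac{2m}{n-1}+n-2 < \frac{n}{\sqrt{n-1}}\,R(G)$, equivalently $R(G) > \frac{\sqrt{n-1}}{n}\left(\frac{2m}{n-1}+n-2\right)$. The key point is that this threshold is exactly the one appearing in the statement:
\[
\frac{\sqrt{n-1}}{n}\left(\frac{2m}{n-1}+n-2\right)=\frac{2m}{n\sqrt{n-1}}+\frac{(n-2)\sqrt{n-1}}{n}=\sqrt{n-1}+\frac{2m-2n+2}{n\sqrt{n-1}},
\]
where the last equality is verified by writing $\frac{2m-2n+2}{n\sqrt{n-1}}=\frac{2m}{n\sqrt{n-1}}-\frac{2(n-1)}{n\sqrt{n-1}}=\frac{2m}{n\sqrt{n-1}}-\frac{2\sqrt{n-1}}{n}$, so that the constant terms recombine into $\frac{(n-2)\sqrt{n-1}}{n}$.

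Thus I would proceed in three steps. First, invoke Theorem~\ref{FY} to bound $q(G)$ from above by $\frac{2m}{n-1}+n-2$. Second, use the identity above to rewrite the hypothesis $R(G)>\sqrt{n-1}+\frac{2m-2n+2}{n\sqrt{n-1}}$ in the equivalent form $\frac{n}{\sqrt{n-1}}\,R(G) > \frac{2m}{n-1}+n-2$. Third, chain the two to get $q(G)\le \frac{2m}{n-1}+n-2 < \frac{n}{\sqrt{n-1}}\,R(G)$; since $G$ is connected on $n\ge 4$ vertices, $R(G)>0$, so dividing by $R(G)$ yields $\frac{q(G)}{R(G)}<\frac{n}{\sqrt{n-1}}$. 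Note that the hypothesis $m\ge n$ is not actually needed for this argument: it only records the regime in which the lemma will be applied later (and it does guarantee $2m-2n+2>0$, so that the stated threshold genuinely exceeds $\sqrt{n-1}$).

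Since the whole proof is a one-line application of Theorem~\ref{FY} together with an elementary identity, I do not anticipate a genuine obstacle; the only step requiring (minor) care is the rewriting, where one must correctly track the factor $\sqrt{n-1}$ versus $\tfrac{1}{\sqrt{n-1}}$ and the sign of $2m-2n+2$. The strictness of the conclusion is inherited entirely from the strict inequality assumed on $R(G)$.
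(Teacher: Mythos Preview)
Your proof is correct and follows essentially the same approach as the paper: both apply the Feng--Yu bound $q(G)\le \tfrac{2m}{n-1}+n-2$ and then verify the identity $\left(\sqrt{n-1}+\tfrac{2m-2n+2}{n\sqrt{n-1}}\right)\cdot\tfrac{n}{\sqrt{n-1}}=\tfrac{2m}{n-1}+n-2$, which immediately yields the conclusion. Your additional remark that the hypothesis $m\ge n$ is not needed for the argument itself is also correct.
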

\begin{proof}
Recall Theorem \ref{FY}.  We have
\[
\frac{q(G)}{R(G)} < \frac{\frac{2m}{n-1}+n-2}{ \sqrt{n-1}+\frac{2m-2n+2}{n\sqrt{n-1}}}.
\]
We note
\begin{align*}
\left(\sqrt{n-1}+\frac{2m-2n+2}{n\sqrt{n-1}}\right)\frac{n}{\sqrt{n-1}}&=\sqrt{n-1} \cdot  \frac{n}{\sqrt{n-1}}+  \frac{2m-2n+2}{n\sqrt{n-1}} \cdot  \frac{n}{\sqrt{n-1}}\\
                                                                                                                             &=n+\frac{2m-2n+2}{n-1}\\
                                                                                                                              &= \frac{2m}{n-1}+n-2.
\end{align*}
 This lemma follows easily.
\end{proof}
We recall the following lower bound for $R(G)$.
\begin{theorem}[Bollob\'{a}s and Erd\H{o}s \cite{BE}]\label{ThBE}
Let $G$ be a graph with $n$ vertices. If $\delta(G)\geq 1$,
then $R(G)\geq \sqrt{n-1}$ and the equality holds if and only if $G=S_n$.
\end{theorem}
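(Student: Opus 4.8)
The plan is to prove the sharp lower bound $R(G)\ge\sqrt{n-1}$ by strong induction on $n$, with the star emerging as the unique extremal graph. The base case $n=2$ is immediate, since the only graph with $\delta\ge 1$ is $K_2=S_2$ and $R(K_2)=1$. For the inductive step I first dispose of disconnected graphs: if $G$ has components $G_1,\dots,G_k$ with $k\ge 2$ and $|V(G_i)|=n_i\ge 2$, then $R(G)=\sum_i R(G_i)\ge\sum_i\sqrt{n_i-1}$ by the induction hypothesis, and the elementary inequality $\sqrt{a-1}+\sqrt{b-1}>\sqrt{a+b-1}$ (valid for $a,b\ge 2$, since $4(a-1)(b-1)\ge 4>1$) iterates to give $\sum_i\sqrt{n_i-1}>\sqrt{n-1}$. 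Hence a disconnected graph satisfies the bound strictly; in particular equality forces $G$ connected. It remains to treat connected $G$, which I split according to whether $\delta(G)=1$ or $\delta(G)\ge 2$.

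The case $\delta(G)=1$ is the clean heart of the argument and is handled by peeling a leaf. Let $v$ be a vertex of degree $1$ with unique neighbour $w$, and write $d=d(w)$; since $G$ is connected with $n\ge 3$ we have $2\le d\le n-1$. Set $G'=G-v$, a connected graph on $n-1$ vertices with $\delta(G')\ge 1$ in which $w$ has degree $d-1$ and every other degree is unchanged. Writing $N=N(w)\setminus\{v\}$ and noting that the edges avoiding $w$ contribute identically to $R(G)$ and $R(G')$, a direct computation gives
\[
R(G)-R(G')=\frac{1}{\sqrt{d}}+\Big(\frac{1}{\sqrt{d}}-\frac{1}{\sqrt{d-1}}\Big)\sum_{u\in N}\frac{1}{\sqrt{d(u)}}.
\]
Because the coefficient of the sum is negative and $\sum_{u\in N}\tfrac{1}{\sqrt{d(u)}}\le|N|=d-1$, this yields $R(G)-R(G')\ge\tfrac{1}{\sqrt d}+(d-1)\big(\tfrac{1}{\sqrt d}-\tfrac{1}{\sqrt{d-1}}\big)=\sqrt{d}-\sqrt{d-1}$, which is decreasing in $d$ and hence at least $\sqrt{n-1}-\sqrt{n-2}$. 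Combined with $R(G')\ge\sqrt{n-2}$ from the induction hypothesis, this gives $R(G)\ge\sqrt{n-1}$. Tracing the equalities—$R(G')=\sqrt{n-2}$ forces $G'=S_{n-1}$, the bound on the sum forces every vertex of $N$ to have degree $1$, and $\sqrt d-\sqrt{d-1}$ attains $\sqrt{n-1}-\sqrt{n-2}$ only at $d=n-1$—pins down $G=S_n$ as the unique equality graph in this case.

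The remaining case $\delta(G)\ge 2$ is the main obstacle, and here the goal is to prove the strict inequality $R(G)>\sqrt{n-1}$, so that no $\delta\ge 2$ graph can be extremal. I would combine two complementary estimates. For the first, the bound $d(u)\le\Delta$ (with $\Delta$ the maximum degree) gives $\sum_{u\sim v}\tfrac{1}{\sqrt{d(u)}}\ge\tfrac{d(v)}{\sqrt{\Delta}}$ for every $v$; since $2R(G)=\sum_v\tfrac{1}{\sqrt{d(v)}}\sum_{u\sim v}\tfrac{1}{\sqrt{d(u)}}$, summing yields $2R(G)\ge\Delta^{-1/2}\sum_v\sqrt{d(v)}\ge\sqrt{2}\,n/\sqrt{\Delta}$, which already exceeds $2\sqrt{n-1}$ whenever $\Delta\le\tfrac{n^2}{2(n-1)}$. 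For the complementary range of large $\Delta$, I would delete a vertex $z$ of maximum degree and apply the induction hypothesis to $H=G-z$: since $\delta(G)\ge 2$ each $u\ne z$ keeps a neighbour in $H$, so $\delta(H)\ge 1$ and $R(H)\ge\sqrt{n-2}$, while $d_G(u)\le d_H(u)+1$ lets me compare the within-$H$ edge terms of $R(G)$ with those of $R(H)$ and bound $R(G)$ below by the edge-contribution at $z$ plus a fixed fraction of $R(H)$.

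The delicate point—and what I expect to be the crux—is that in the regime $\Delta\approx n-1$ neither estimate is individually strong enough: the crude bound on the edges at $z$ yields only a constant, and the comparison of the within-$H$ terms loses a factor that, measured against $R(H)\ge\sqrt{n-2}$, falls a constant factor short of $\sqrt{n-1}$. Resolving this requires exploiting the tension between the two scenarios—if the neighbours of $z$ have small degree then the edge-terms at $z$ are large, whereas if they have large degree then $H$ is dense and $R(H)$ is correspondingly large—so the heart of the $\delta\ge 2$ case is a careful quantitative balancing of the edge-contribution at $z$ against the true size of $R(H)$, rather than the two clean one-shot inequalities above.
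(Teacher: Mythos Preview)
The paper does not prove this theorem; it is quoted from Bollob\'as and Erd\H{o}s and used as a black box throughout, so there is no ``paper's own proof'' to compare your attempt against.

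That said, your proposal has a genuine gap. Your treatment of the disconnected case and of the connected case with $\delta(G)=1$ is correct and clean: the leaf-peeling computation
\[
R(G)-R(G')\ge \sqrt{d}-\sqrt{d-1}\ge \sqrt{n-1}-\sqrt{n-2}
\]
is exactly right, and the equality analysis correctly pins down $G=S_n$. The problem is the connected case with $\delta(G)\ge 2$, which you do not finish. You offer two estimates---one good for small $\Delta$, one based on deleting a maximum-degree vertex---but you then admit yourself that neither covers the regime $\Delta\approx n-1$, and you leave the required ``careful quantitative balancing'' as a promise rather than an argument. Since the induction hypothesis at level $n$ must cover \emph{all} graphs on $n$ vertices with $\delta\ge 1$, including those with $\delta\ge 2$, the induction does not close without this case; as written, the proof is incomplete.

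If you want to patch the $\delta\ge 2$ case, note that the paper separately quotes the Delorme--Favaron--Rautenbach bound (Theorem~\ref{ThDFR}),
\[
R(G)\ge \sqrt{2(n-1)}+\frac{1}{n-1}-\sqrt{\frac{2}{n-1}},
\]
which is strictly larger than $\sqrt{n-1}$ for all $n\ge 3$; invoking it would close your gap immediately, though this is chronologically backwards since that result postdates Bollob\'as--Erd\H{o}s. The original argument in \cite{BE} does not proceed by the induction you set up, so if you want a self-contained proof in the spirit of your proposal you will need to actually carry out the balancing you describe.
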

If $\delta(G) \geq 2$, then we need the following better lower bound for $R(G)$.
\begin{theorem}[Delorme, Favaron,  and Rautenbach \cite{DelFavRau}]\label{ThDFR}
Let $G$ be a graph on $n$ vertices. If $\delta(G)\geq 2$,
then
\begin{align}
R(G)\geq \sqrt{2(n-1)}+\frac{1}{n-1}-\sqrt{\frac{2}{n-1}}.
\end{align}
\end{theorem}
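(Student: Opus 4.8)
The plan is to prove the inequality by exhibiting the graph that attains it and then showing no graph with $\delta\ge 2$ can do better. The natural candidate for the extremal graph is $H_n:=K_2\vee\overline{K_{n-2}}$: take two vertices $a,b$ adjacent to each other and to all of $v_1,\dots,v_{n-2}$, while the $v_i$ are pairwise nonadjacent. Then $d(a)=d(b)=n-1$ and $d(v_i)=2$, and summing $\tfrac{1}{\sqrt{d(u)d(v)}}$ over the single edge $ab$ and the $2(n-2)$ edges $av_i,bv_i$ gives
\[
R(H_n)=\frac{1}{n-1}+\frac{2(n-2)}{\sqrt{2(n-1)}}=\sqrt{2(n-1)}+\frac{1}{n-1}-\sqrt{\tfrac{2}{n-1}}.
\]
Hence the stated bound equals $R(H_n)$, it is best possible, and it suffices to prove $R(G)\ge R(H_n)$ whenever $\delta(G)\ge 2$.

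The main tool I would use is the exact identity
\[
R(G)=\frac{n}{2}-\frac12\sum_{\{u,v\}\in E(G)}\left(\frac{1}{\sqrt{d(u)}}-\frac{1}{\sqrt{d(v)}}\right)^{2},
\]
obtained by writing $\tfrac{1}{\sqrt{d(u)d(v)}}=\tfrac12\big(\tfrac{1}{d(u)}+\tfrac{1}{d(v)}\big)-\tfrac12\big(\tfrac{1}{\sqrt{d(u)}}-\tfrac{1}{\sqrt{d(v)}}\big)^2$ and using $\sum_{\{u,v\}\in E}\big(\tfrac{1}{d(u)}+\tfrac{1}{d(v)}\big)=\sum_{v}d(v)\cdot\tfrac{1}{d(v)}=n$. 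This converts the desired lower bound on $R(G)$ into an \emph{upper} bound on the irregularity sum $\mathrm{Irr}(G):=\sum_{\{u,v\}\in E}\big(\tfrac{1}{\sqrt{d(u)}}-\tfrac{1}{\sqrt{d(v)}}\big)^2$, and the computation above pins down the exact value $\mathrm{Irr}(H_n)$ that I must show is the maximum.

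To bound $\mathrm{Irr}(G)$ I would start from the elementary estimate that for an edge $\{u,v\}$ with $d(u)\le d(v)$ one has $\big(\tfrac{1}{\sqrt{d(u)}}-\tfrac{1}{\sqrt{d(v)}}\big)^2\le \tfrac{1}{d(u)}$; orienting each edge toward its endpoint of smaller degree and summing yields $\mathrm{Irr}(G)\le\sum_v \tfrac{d^-(v)}{d(v)}\le n$, where $d^-(v)$ counts the neighbours of $v$ of larger degree. This already lands within a lower-order term of the target, so the crux is to recover the missing slack of order $\sqrt n$. That slack lives precisely in the terms I discarded, namely $\tfrac{1}{d(v)}$ and $\tfrac{2}{\sqrt{d(u)d(v)}}$, which are non-negligible exactly when some endpoint has large degree; since the degree sequence is graphical only few vertices can be of large degree, so these discarded contributions are forced to be substantial. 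Making this quantitative — ideally by pairing it with the complementary structural reduction that in any $R$-minimizer the degree-$2$ vertices may be assumed adjacent only to high-degree vertices, so that a minimizer is driven toward the shape of $H_n$ — and matching the exact second-order constants $\tfrac{1}{n-1}-\sqrt{2/(n-1)}$ is the step I expect to be the main obstacle. The graphicality constraint (one cannot simultaneously make every vertex high-degree) is ultimately what rules out any configuration with smaller $R$ than $H_n$.
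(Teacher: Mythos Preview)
This theorem is not proved in the paper; it is quoted from Delorme, Favaron, and Rautenbach \cite{DelFavRau} and used as a black box (in Lemma~\ref{LeMin2} and Lemma~\ref{base13}). So there is no in-paper proof to compare against.

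On its own merits, your proposal correctly identifies the extremal graph $H_n=K_2\vee\overline{K_{n-2}}$ and verifies that $R(H_n)$ equals the stated bound, and the identity $R(G)=\tfrac{n}{2}-\tfrac12\,\mathrm{Irr}(G)$ is valid and a standard starting point. But the argument is not a proof: your crude estimate $\mathrm{Irr}(G)\le n$ yields only $R(G)\ge 0$, which is vacuous. You describe the remaining gap as ``lower order'', but since the target value $R(H_n)$ is itself of order $\sqrt{n}$, a slack of order $\sqrt{n}$ is not a correction term --- it is the entire content of the inequality. You explicitly flag ``making this quantitative'' as the main obstacle and do not carry it out; the heuristic that graphicality forces few high-degree vertices, hence the discarded $\tfrac{2}{\sqrt{d(u)d(v)}}$ terms must be substantial, is suggestive but is not an argument, and the structural reduction you allude to (that in a minimizer degree-$2$ vertices attach only to high-degree vertices) is asserted rather than established. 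In short, you have set the stage correctly and named the difficulty, but the theorem remains unproved.
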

A consequence of the theorem above is the following lemma.
\begin{lemma}\label{LeMin2}
Let $G$ be a connected graph with $n\geq 12$ vertices
and $n+k$ edges, where $1\leq k\leq 10$. If $\delta(G) \geq 2$, then
\[
R(G) > \sqrt{n-1}+\frac{2(k+1)}{n\sqrt{n-1}}.
\]
\end{lemma}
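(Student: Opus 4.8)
The plan is to deduce this from the lower bound of Delorme, Favaron, and Rautenbach (Theorem~\ref{ThDFR}) together with an elementary one-variable estimate. Since $\delta(G)\ge 2$, Theorem~\ref{ThDFR} gives
\[
R(G)\ \ge\ \sqrt{2(n-1)}+\frac{1}{n-1}-\sqrt{\frac{2}{n-1}},
\]
so it suffices to prove that, for every $n\ge 12$ and every $1\le k\le 10$,
\[
\sqrt{2(n-1)}+\frac{1}{n-1}-\sqrt{\frac{2}{n-1}}\ >\ \sqrt{n-1}+\frac{2(k+1)}{n\sqrt{n-1}}.
\]
The right-hand side is increasing in $k$, so it is enough to handle the case $k=10$; that is, it suffices to show
\[
\sqrt{2(n-1)}+\frac{1}{n-1}-\sqrt{\frac{2}{n-1}}\ >\ \sqrt{n-1}+\frac{22}{n\sqrt{n-1}}.
\]

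To verify this I would set $t=\sqrt{n-1}$, so that $t\ge\sqrt{11}$ and $n=t^{2}+1$, and rewrite the inequality (after subtracting $\sqrt{n-1}=t$ from both sides) as
\[
(\sqrt{2}-1)\,t+\frac{1}{t^{2}}-\frac{\sqrt{2}}{t}-\frac{22}{t^{3}+t}\ >\ 0 .
\]
Dropping the nonnegative term $1/t^{2}$, it remains to prove that
\[
g(t):=(\sqrt{2}-1)\,t-\frac{\sqrt{2}}{t}-\frac{22}{t^{3}+t}
\]
is positive on $[\sqrt{11},\infty)$. Since $(\sqrt{2}-1)t$ is increasing while $\sqrt{2}/t$ and $22/(t^{3}+t)$ are decreasing, $g$ is increasing on that interval, so it is enough to check $g(\sqrt{11})>0$. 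At $t=\sqrt{11}$ one has $t^{3}+t=12\sqrt{11}$, and a short manipulation (multiplying through by $\sqrt{11}$) reduces $g(\sqrt{11})>0$ to $10\sqrt{2}>\tfrac{77}{6}$, i.e. to $60\sqrt{2}>77$, equivalently $7200>5929$, which is clear. This completes the argument.

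The proof is essentially routine; the only point requiring a little care is that the numerical slack at the boundary case $n=12$ is modest, so the final estimate should be organized around the clean rational inequality $7200>5929$ rather than around decimal approximations. I do not anticipate any real obstacle.
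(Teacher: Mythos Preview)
Your proof is correct and follows essentially the same approach as the paper: both apply Theorem~\ref{ThDFR} and then reduce to an elementary one-variable inequality, with the paper simply rewriting $\sqrt{2(n-1)}-\sqrt{2/(n-1)}$ as $(2n-4)/\sqrt{2n-2}$ and asserting the remaining inequality is easy to verify. Your substitution $t=\sqrt{n-1}$ and the clean reduction to $7200>5929$ make explicit the verification the paper leaves to the reader.
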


\begin{proof}
Recall that $n \geq 12$ and $e(G)=n+k$, where $1 \leq k \leq 10$.
By Theorem \ref{ThDFR}, we have
\[
R(G)\geq \frac{2n-4}{\sqrt{2n-2}}+\frac{1}{n-1}.
\]
When $n \geq 9$,  we   can verify
\[
\frac{2n-4}{\sqrt{2n-2}}+\frac{1}{n-1} > \sqrt{n-1}+\frac{2(k+1)}{n\sqrt{n-1}}
\]
easily.
\end{proof}
Among all unicyclic graphs, the minimum value of $R(G)$ is also known.
\begin{theorem}[Gao and Lu \cite{GaoLu}]\label{ThGL}
Let $G$ be a unicyclic graph on $n$ vertices. Then $R(G)$ attains its minimum value
when $G$ is  $S^{\ast}_n$, where $S^{\ast}_n$ is  obtained from the  star with $n$  vertices by adding an edge between leaves.
\end{theorem}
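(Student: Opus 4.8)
The plan is to take a unicyclic graph $G$ on $n$ vertices with $R(G)$ minimum (for $n=3$ we have $G=K_3=S^{\ast}_3$, so assume $n\ge 4$) and pin down its structure through a short sequence of edge-surgeries, each preserving ``connected, unicyclic, $n$ vertices'' while strictly lowering $R$. Every surgery is a form of ``concentrating degree at one vertex'', and each inequality it needs is a brief consequence of the strict decrease and strict convexity of $f(t)=t^{-1/2}$ --- the same mechanism that already makes the star the unique tree minimizer in Theorem \ref{ThBE}.

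First I would kill pendant paths of length $\ge 2$. Suppose a leaf $w$ is adjacent to a degree-$2$ vertex $u$, with $z\neq w$ the other neighbour of $u$; then $d(z)\ge 2$ since $G$ is connected, unicyclic and $n\ge 4$. Replace the (non-cycle) edge $uw$ by the edge $zw$: the new $G'$ is unicyclic on $n$ vertices with $u,w$ pendants of $z$. Only the edges $uw$ (removed), $uz$, $zw$ (added) and the remaining edges at $z$ change, and with $d=d_G(z)$,
\[
R(G')-R(G)=\frac{2}{\sqrt{d+1}}-\frac{1}{\sqrt2}-\frac{1}{\sqrt{2d}}+\Bigl(\frac{1}{\sqrt{d+1}}-\frac{1}{\sqrt d}\Bigr)\sum_{y\sim z,\ y\neq u}\frac{1}{\sqrt{d(y)}},
\]
which is negative because the first three terms sum to something negative for every $d\ge 2$ and the last term is negative. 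So in $G$ every leaf is adjacent to a vertex of degree $\ge 3$.

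Next I would reduce to a triangle carrying all the pendants. Using the ``pendant-shift'' of the last step (moving a pendant leaf, or pendant subtree, from a vertex of smaller degree to one of larger-or-equal degree strictly lowers $R$ by convexity), one first shifts all pendant material onto the cycle and then onto a single cycle vertex $v_1$, so every other cycle vertex has degree $2$. If the cycle $v_1v_2\cdots v_g v_1$ has $g\ge 4$, then $v_2$ and $v_4$ both have degree $2$, so delete $v_3v_4$ and add $v_2v_4$: this keeps the graph unicyclic with a cycle of length $g-1$, turns $v_3$ into a pendant of $v_2$, and --- all the degrees in play being small --- a direct computation gives a strict decrease of $R$. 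Shifting that lone pendant back onto $v_1$ and repeating drives $g$ down to $3$. Finally $G$ is a triangle $v_1v_2v_3$ with $a_i\ge 0$ pendant leaves at $v_i$, $a_1+a_2+a_3=n-3$, so
\[
R(G)=\sum_{1\le i<j\le 3}\frac{1}{\sqrt{(2+a_i)(2+a_j)}}+\sum_{i=1}^{3}\frac{a_i}{\sqrt{2+a_i}} ,
\]
and the pendant-shift (if $a_i\ge a_j\ge 1$, moving one pendant from $v_j$ to $v_i$ strictly decreases $R$) forces the minimum at $(a_1,a_2,a_3)=(n-3,0,0)$, i.e.\ $G=S^{\ast}_n$, with $R(S^{\ast}_n)=\tfrac{n-3}{\sqrt{n-1}}+\tfrac{2}{\sqrt{2(n-1)}}+\tfrac12$.

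The main obstacle is organising the cycle-shortening step: the re-routing move on a long cycle lowers one Randi\'c term but raises another, so it is only guaranteed to help after the pendant material has been consolidated away from the relevant cycle vertices; one must interleave the pendant-shifts and the re-routings in the right order, and handle pendant subtrees rather than just leaves, while always keeping the graph unicyclic. The individual inequalities --- the displayed one and the pendant-shift estimate --- are routine one-variable convexity computations.
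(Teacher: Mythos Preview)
The paper does not prove this statement; it is quoted verbatim from Gao and Lu \cite{GaoLu} and used as a black box in the proof of Theorem~\ref{main}. So there is no ``paper's own proof'' to compare against.

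Your outline is the standard transformation approach and is, in spirit, what Gao and Lu do: repeatedly apply $R$-decreasing local moves that preserve the unicyclic property until one reaches $S_n^{\ast}$. Your first reduction (absorbing a pendant path of length $\ge 2$) is correct as written --- the displayed difference is right, and the three explicit terms are indeed negative for every $d\ge 2$ since $\phi(d)=\tfrac{2}{\sqrt{d+1}}-\tfrac{1}{\sqrt2}-\tfrac{1}{\sqrt{2d}}$ is decreasing in $d$ and $\phi(2)<0$. The final optimisation over triangles with $a_1+a_2+a_3=n-3$ pendants is also fine. The genuine gap is exactly the one you flag: the blanket pendant-shift principle (``moving a pendant from a vertex of smaller degree to one of at least as large degree strictly lowers $R$'') is \emph{not} a pure convexity fact, because the change in $R$ also picks up contributions from every other edge incident to both endpoints, and those contributions can have either sign depending on the degrees of the neighbours. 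In the original proof this is handled not by one slogan but by a handful of explicit transformation lemmas, each with its own side conditions and its own one-variable inequality; only with those in hand can the consolidation onto a single cycle vertex and the cycle-shortening be interleaved cleanly. So your plan is sound, but the middle steps need the individual inequalities written out and checked rather than deferred to ``convexity''.
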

The following theorem allows us to  compare the Randi\'c index of a  graph and a  related graph obtained by deleting a minimum degree vertex.
\begin{theorem}[Hansen and Vukicevi\'c \cite{HansenV}]\label{ThHV}
Let $G$ be a graph with the Randi\'{c} index $R$, minimum degree $\delta$ and maximum
degree $\Delta$.  If  $v$ is a vertex of $G$ with degree  $\delta$,  then
\[
R(G)-R(G-v) \geq \frac{1}{2}\sqrt{\frac{\delta}{\Delta}}.
\]
\end{theorem}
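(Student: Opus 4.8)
The plan is to track the effect of deleting $v$ edge by edge. Write $\delta=d(v)$ and $N(v)=\{u_1,\dots,u_\delta\}$, and set $d_i=d(u_i)$. Since $v$ has minimum degree, every vertex of $G$ has degree at least $\delta$; in particular $\delta\le d_i\le\Delta$ for each $i$. Passing from $G$ to $G-v$ destroys the $\delta$ edges at $v$ and lowers each $d(u_i)$ by exactly one, leaving all remaining degrees unchanged. Accordingly I would write $R(G)-R(G-v)=A-B$, where $A=\sum_{i=1}^{\delta}\frac{1}{\sqrt{\delta d_i}}$ is the contribution of the edges incident to $v$ and $B\ge 0$ is the total increase of the surviving edge-terms caused by the degree drops of the $u_i$. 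Because $d_i\le\Delta$, the gain satisfies $A\ge \delta\cdot\frac{1}{\sqrt{\delta\Delta}}=\sqrt{\delta/\Delta}$, so it suffices to prove the loss bound $B\le\frac12 A$: then $R(G)-R(G-v)=A-B\ge\frac12 A\ge\frac12\sqrt{\delta/\Delta}$.

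To estimate $B$ I would classify the affected edges. An edge $\{u_i,w\}$ with $w\notin N(v)$ keeps $d(w)$ fixed and adds $\frac{1}{\sqrt{d(w)}}\bigl(\frac{1}{\sqrt{d_i-1}}-\frac{1}{\sqrt{d_i}}\bigr)$ to $B$, whereas an edge $\{u_i,u_j\}$ inside $N(v)$ adds $\frac{1}{\sqrt{(d_i-1)(d_j-1)}}-\frac{1}{\sqrt{d_id_j}}$. I would charge each external term to its unique neighbour endpoint and split each internal term evenly between its two endpoints, so that $B=\sum_{i=1}^\delta B_i$, and aim for a bound $B_i\le\frac{1}{2\sqrt{\delta d_i}}$; summing over $i$ would give $B\le\frac12 A$. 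For the external edges the estimate $d(w)\ge\delta$ collapses the relevant inequality to $(d_i-1)\bigl(\frac{1}{\sqrt{d_i-1}}-\frac{1}{\sqrt{d_i}}\bigr)\le\frac{1}{2\sqrt{d_i}}$, which is just $\sqrt{d_i-1}\le\sqrt{d_i}$ after clearing denominators.

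The hard part, and the main obstacle, is the edges inside $N(v)$: both endpoints lose a degree simultaneously, so the loss is genuinely larger, and the minimum-degree hypothesis must be used with care. A term-by-term estimate that replaces each partner degree $d_j$ by its smallest possible value $\delta$ is too wasteful; indeed one checks that the resulting bound on a single $B_i$ can exceed $\frac{1}{2\sqrt{\delta d_i}}$ (for instance when $d_i=3$, $\delta=2$), so the clean per-vertex inequality need not hold in isolation. The saving feature is structural: a neighbour whose degree is close to $\delta$ has few incident edges and hence lies in few triangles through $v$, so the deficit created at a high-degree neighbour sitting in many such triangles is compensated by the correspondingly small losses at its low-degree partners. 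I therefore expect to prove $B\le\frac12 A$ in aggregate rather than vertex by vertex, reducing, after bounding partner degrees and grouping the internal edges, to elementary one-variable estimates such as $\sqrt{d_i-1}\le\sqrt{d_i}$ and $\frac{d_i-1}{\delta-1}\le\frac{d_i}{\delta}$ (each equivalent to $\delta\le d_i$), and tracing the equalities to pin down $K_n$ and $S_n$ as the only extremal graphs. Making this compensation precise—controlling the interaction of the triangles through $v$ with the degrees of the neighbours—is where I expect the real work to lie.
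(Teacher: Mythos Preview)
The paper does not give its own proof of this statement: Theorem~\ref{ThHV} is quoted from Hansen and Vukicevi\'c and used only as a black box (in Lemma~\ref{LeInduBas} and Proposition~\ref{delete}). There is therefore nothing in the paper to compare your argument against.

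On the merits of your proposal itself: the decomposition $R(G)-R(G-v)=A-B$, the lower bound $A\ge\sqrt{\delta/\Delta}$, and the reduction to $B\le\tfrac12 A$ are all correct, and $B=\tfrac12 A$ does occur for $G=K_n$, so the target is sharp. Your external-edge estimate is clean and valid. The gap is exactly where you place it. For the internal edges $\{u_i,u_j\}\subset N(v)$ you correctly note that the per-vertex inequality $B_i\le\tfrac{1}{2\sqrt{\delta d_i}}$ can fail (your example $\delta=2$, $d_i=3$ with $u_i$ lying in a triangle through $v$ is genuine), and you then retreat to a heuristic compensation principle (``low-degree partners sit in few triangles, so deficits cancel'') without ever writing down an inequality that realises it. That missing step is not a technicality to be filled in later---it is essentially the entire content of the theorem---so what you have is a reasonable outline that halts at the crux rather than a proof.
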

The following lemma will be useful for us later.
\begin{lemma}\label{LeInduBas}
Let $G$ be a connected graph with $n$ vertices and $e(G)=n+k$, where $1\leq k \leq 10$. Let $v$
be a vertex with $d(v)=1$. If
\[
R(G-v) > \sqrt{n-2}+\frac{2(k+1)}{(n-1)\sqrt{n-2}},
\]
then we have
\[
R(G) > \sqrt{n-1}+\frac{2(k+1)}{n\sqrt{n-1}}.
\]
\end{lemma}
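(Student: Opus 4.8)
The plan is to control how the Randi\'c index changes when the pendant vertex $v$ is deleted, and then feed the resulting inequality into the hypothesis about $G-v$. Write $w$ for the unique neighbour of $v$ in $G$ and set $d:=d_{G-v}(w)=d_G(w)-1$; since $v$ is a leaf, $G-v$ is connected, and one checks $1\le d\le n-2$. Passing from $G-v$ to $G$ only introduces the edge $vw$ and raises the degree of $w$ from $d$ to $d+1$, while no other degree changes; hence
\[
R(G)-R(G-v)=\frac{1}{\sqrt{d+1}}+\left(\frac{1}{\sqrt{d+1}}-\frac{1}{\sqrt{d}}\right)\sum_{u\in N_G(w)\setminus\{v\}}\frac{1}{\sqrt{d_{G-v}(u)}}.
\]

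The next step is to bound the right-hand side from below. The coefficient $\frac{1}{\sqrt{d+1}}-\frac{1}{\sqrt{d}}$ is negative, and since $d_{G-v}(u)\ge 1$ for every $u$ the sum is at most $|N_G(w)\setminus\{v\}|=d$; therefore
\[
R(G)-R(G-v)\ge\frac{1}{\sqrt{d+1}}+d\left(\frac{1}{\sqrt{d+1}}-\frac{1}{\sqrt{d}}\right)=\sqrt{d+1}-\sqrt{d}.
\]
As $x\mapsto\sqrt{x+1}-\sqrt{x}=\frac{1}{\sqrt{x+1}+\sqrt{x}}$ is decreasing and $d\le n-2$, this yields $R(G)-R(G-v)\ge\sqrt{n-1}-\sqrt{n-2}$.

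Combining this with the hypothesis $R(G-v)>\sqrt{n-2}+\frac{2(k+1)}{(n-1)\sqrt{n-2}}$ gives
\[
R(G)>\sqrt{n-2}+\frac{2(k+1)}{(n-1)\sqrt{n-2}}+\bigl(\sqrt{n-1}-\sqrt{n-2}\bigr)=\sqrt{n-1}+\frac{2(k+1)}{(n-1)\sqrt{n-2}},
\]
so it only remains to observe $(n-1)\sqrt{n-2}<n\sqrt{n-1}$. Squaring and cancelling a factor $n-1$, this is $(n-1)(n-2)<n^2$, i.e.\ $-3n+2<0$, which is clear; hence $\frac{2(k+1)}{(n-1)\sqrt{n-2}}>\frac{2(k+1)}{n\sqrt{n-1}}$ and the claimed bound $R(G)>\sqrt{n-1}+\frac{2(k+1)}{n\sqrt{n-1}}$ follows.

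The only delicate point is making the lower bound on $R(G)-R(G-v)$ strong enough. Applying Theorem~\ref{ThHV} with $\delta=1$ would only give $R(G)-R(G-v)\ge\frac{1}{2\sqrt{\Delta}}\ge\frac{1}{2\sqrt{n-1}}$, and because $\sqrt{n-2}+\frac{1}{2\sqrt{n-1}}<\sqrt{n-1}$ this turns out to be just too weak once $n$ is large relative to $k$ (already for, say, $n=100$, $k=1$); the slightly larger quantity $\sqrt{n-1}-\sqrt{n-2}=\frac{1}{\sqrt{n-1}+\sqrt{n-2}}$ produced by the bookkeeping above is exactly what closes the gap. One could alternatively invoke Theorem~\ref{ThHV} directly when $\Delta(G)\le n-2$ and handle the dominating-vertex case $\Delta(G)=n-1$---where $w$ must be the dominating vertex and $d=n-2$---by hand, but the computation above disposes of all cases uniformly.
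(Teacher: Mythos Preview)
Your argument is correct. The direct bookkeeping $R(G)-R(G-v)\ge\sqrt{d+1}-\sqrt{d}\ge\sqrt{n-1}-\sqrt{n-2}$ (where $d=d_{G-v}(w)$) is a clean refinement of what Theorem~\ref{ThHV} gives for a leaf, and the final inequality $(n-1)\sqrt{n-2}<n\sqrt{n-1}$ is immediate. The paper proceeds differently: it splits into the cases $\Delta(G)=n-1$ and $\Delta(G)\le n-2$. In the latter case it quotes Theorem~\ref{ThHV} to get $R(G)-R(G-v)\ge\frac{1}{2\sqrt{n-2}}$ (note this is slightly \emph{stronger} than your uniform bound, since $\sqrt{n-1}+\sqrt{n-2}>2\sqrt{n-2}$) and then checks the resulting numerical inequality; in the former case, where the leaf's neighbour must be the dominating vertex, it writes $R(G-v)$ and $R(G)$ explicitly in terms of the auxiliary quantity $L=\sum_{\{x,y\}\in E(H)}(d_G(x)d_G(y))^{-1/2}$ (with $H$ the graph induced on the non-dominating, non-leaf vertices) and manipulates. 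Your approach trades a marginally weaker increment bound for a uniform argument with no case split---essentially you are re-deriving the relevant instance of Theorem~\ref{ThHV} by hand and sharpening it just enough to work when $\Delta=n-1$---and your closing paragraph already identifies this trade-off correctly.
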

\begin{proof}
Let $v_0$ be a vertex
with $d(v_0)=\Delta$.  Recall  $v$ is  a vertex with $d(v)=1$ by the assumption. Let $H$ be the subgraph induced by $V(G)-\{v_0,v\}$ and
\[
L:=\sum_{\{x,y\} \in E(H)}\frac{1}{\sqrt{d_G(x)d_G(y)}}.
\]
If $\Delta=n-1$, then observe that
\[
R(G-v)=\left(R(G)-\frac{1}{\sqrt{n-1}}-L \right)\cdot \frac{\sqrt{n-1}}{\sqrt{n-2}}+L.
\]
Thus,
\begin{align*}
R(G)&=L+\frac{1}{\sqrt{n-1}}+\frac{\sqrt{n-2}}{\sqrt{n-1}}(R(G-v)-L)\\
&=\frac{\sqrt{n-2}}{\sqrt{n-1}}R(G-v)+\frac{1}{\sqrt{n-1}}+\left(1-\frac{\sqrt{n-2}}{\sqrt{n-1}}\right)L\\
&>\frac{\sqrt{n-2}}{\sqrt{n-1}}\left(\sqrt{n-2}+\frac{2(k+1)}{(n-1)\sqrt{n-2}}\right)+\frac{1}{\sqrt{n-1}}\\
&=\sqrt{n-1}+\frac{2(k+1)}{(n-1)\sqrt{n-1}}\\
&>\sqrt{n-1}+\frac{2(k+1)}{n\sqrt{n-1}}.
\end{align*}
If $\Delta\leq n-2$, by Theorem \ref{ThHV}, we have
$R(G)\geq R(G-v)+\frac{1}{2}\sqrt{\frac{1}{n-2}}$.
Thus
\[
R(G)\geq \sqrt{n-2}+\frac{2(k+1)}{(n-1)\sqrt{n-2}}+\frac{1}{2}\sqrt{\frac{1}{n-2}}>\sqrt{n-1}+\frac{2(k+1)}{n\sqrt{n-1}}.
\]
The proof is complete.
\end{proof}
We need the following proposition involving  the  vertex deletion.
\begin{prop} \label{delete}
For a connected graph $G$, assume $(v_1,\ldots,v_s)$ is an ordered set of vertices.   Let $G_0=G$ and $G_{i}=G_{i-1}-v_i$ for $1 \leq i \leq s$. If $v_{i}$ has degree one in $G_{i-1}$ for each $1 \leq i \leq s$, then  we have
\[
R(G) \geq \sum_{i=0}^{s-1} \frac{1}{2\sqrt{\Delta(G_{i})}}+R(G_s).
\]
\end{prop}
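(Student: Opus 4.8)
The plan is to prove this by a short telescoping argument resting entirely on Theorem~\ref{ThHV}. The first thing I would record is a connectivity observation: deleting a vertex of degree one from a connected graph leaves a connected graph. Applying this repeatedly along the sequence $G_0 = G, G_1, \ldots, G_s$ shows that every $G_i$ is connected; in particular each of $G_0, \ldots, G_{s-1}$ has minimum degree at least $1$. Since by hypothesis $v_i$ has degree one in $G_{i-1}$, it follows that $\delta(G_{i-1}) = 1$ and that $v_i$ is a vertex of minimum degree in $G_{i-1}$ — which is exactly the hypothesis needed to invoke Theorem~\ref{ThHV} on $G_{i-1}$.

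Next, for each $1 \le i \le s$ I would apply Theorem~\ref{ThHV} to the graph $G_{i-1}$ with its minimum-degree vertex $v_i$. Because $\delta(G_{i-1}) = 1$, this yields
\[
R(G_{i-1}) - R(G_i) = R(G_{i-1}) - R(G_{i-1} - v_i) \ge \frac{1}{2}\sqrt{\frac{1}{\Delta(G_{i-1})}} = \frac{1}{2\sqrt{\Delta(G_{i-1})}}.
\]

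Finally, I would sum these $s$ inequalities over $i = 1, \ldots, s$. The left-hand side telescopes to $R(G_0) - R(G_s) = R(G) - R(G_s)$, while the right-hand side equals $\sum_{i=1}^{s} \frac{1}{2\sqrt{\Delta(G_{i-1})}} = \sum_{i=0}^{s-1} \frac{1}{2\sqrt{\Delta(G_i)}}$. Rearranging gives the claimed bound. (Equivalently one may phrase the argument as an induction on $s$: the base case $s=0$ is the trivial identity $R(G) = R(G_0)$, and the inductive step applies Theorem~\ref{ThHV} once to $G$ with the leaf $v_1$ and then invokes the inductive hypothesis for $G_1$ and the tail $(v_2,\ldots,v_s)$.) There is no genuinely hard step here; the only point deserving care is checking that the hypothesis of Theorem~\ref{ThHV} holds at every stage, i.e. that $v_i$ is a minimum-degree vertex of $G_{i-1}$, which is precisely what the connectivity observation of the first paragraph guarantees.
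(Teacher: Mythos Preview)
Your proof is correct and follows essentially the same approach as the paper: apply Theorem~\ref{ThHV} at each deletion step to obtain $R(G_{i-1}) \ge R(G_i) + \tfrac{1}{2\sqrt{\Delta(G_{i-1})}}$, then telescope. You are in fact slightly more careful than the paper in explicitly verifying that $v_i$ is a minimum-degree vertex of $G_{i-1}$ via the connectivity observation.
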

\begin{proof}
 Since we assume for each $1 \leq i \leq s$, the vertex $v_i$ has degree one in $G_{i-1}$. If we delete the vertex $v_{i}$ from $G_{i-1}$, then we have $R(G_{i-1}) \geq \tfrac{1}{2\sqrt{\Delta(G_{i-1})}}+R(G_{i})$ by Theorem \ref{ThHV}.  Since this observation holds for all $1 \leq i \leq s$,  the proposition follows.
\end{proof}
Lastly, we  need the following theorem.
\begin{theorem}[Favaron, Mah\'eo, and Sacl\'{e} \cite{FMS}]\label{Th}
For any connected graph $G$ with $m$ edges. If $R$ is the Randi\'c index and $\lambda_1$ is the largest eigenvalue of its adjacency matrix, then
$\lambda_1 \geq \tfrac{m}{R}$.
\end{theorem}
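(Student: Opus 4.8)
The plan is to combine the variational (Rayleigh) characterization of $\lambda_1$ with a single application of the Cauchy--Schwarz inequality, introducing an auxiliary quantity $S:=\sum_{\{u,v\}\in E}\sqrt{d(u)d(v)}$ that will cancel at the very end. The strategy is to bound $\lambda_1$ from below by $S/m$ and $R$ from below by $m^2/S$; the product of these two bounds is exactly $m$.

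First I would use that, since $A$ is real symmetric, its largest eigenvalue satisfies
\[
\lambda_1=\max_{x\neq 0}\frac{x^{\top}Ax}{x^{\top}x}.
\]
I would test this with the degree-weighted vector $x$ defined by $x_v=\sqrt{d(v)}$ for each vertex $v$. For this choice one computes $x^{\top}x=\sum_v d(v)=2m$ and $x^{\top}Ax=2\sum_{\{u,v\}\in E}\sqrt{d(u)d(v)}=2S$, so the Rayleigh quotient yields the lower bound $\lambda_1\ge S/m$.

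Second, I would apply Cauchy--Schwarz to the edge-indexed vectors with entries $(d(u)d(v))^{-1/4}$ and $(d(u)d(v))^{1/4}$, whose termwise product is identically $1$. This gives
\[
m^2=\Big(\sum_{\{u,v\}\in E}1\Big)^2\le \Big(\sum_{\{u,v\}\in E}\tfrac{1}{\sqrt{d(u)d(v)}}\Big)\Big(\sum_{\{u,v\}\in E}\sqrt{d(u)d(v)}\Big)=R\cdot S,
\]
hence $R\ge m^2/S$. Finally, multiplying the two lower bounds (both of positive quantities) makes $S$ cancel:
\[
\lambda_1 R\ge \frac{S}{m}\cdot\frac{m^2}{S}=m,
\]
which is exactly the desired inequality $\lambda_1\ge m/R$.

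The only genuine decision point, and thus the main obstacle, is selecting the degree-weighted test vector $x_v=\sqrt{d(v)}$ together with the matching Cauchy--Schwarz split, arranged so that the intermediate sum $S$ appears as a lower bound in one estimate and as the denominator in the other; once this pairing is set up, the result falls out with no case analysis. I would be careful with the edge versus ordered-pair bookkeeping in $x^{\top}Ax$ (each undirected edge contributing twice) so that the constants line up correctly.
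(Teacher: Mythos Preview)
Your argument is correct: the Rayleigh quotient with $x_v=\sqrt{d(v)}$ gives $\lambda_1\ge S/m$, Cauchy--Schwarz on the edge sums gives $RS\ge m^2$, and combining the two yields $\lambda_1 R\ge m$. The bookkeeping on $x^\top x=2m$ and $x^\top Ax=2S$ is right, and positivity of $R$ and $S$ (guaranteed since $G$ is connected, hence has at least one edge) justifies the final division.

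There is nothing to compare against in this paper: Theorem~\ref{Th} is quoted from Favaron, Mah\'eo, and Sacl\'e \cite{FMS} without proof and is used only as a black box (together with Hong's bound) inside the proof of Lemma~\ref{reduce}. Your self-contained derivation is in fact the standard one and matches the original argument in \cite{FMS}.
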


%

\section{Proof of the Main Theorem}
The following lemma is the key ingredient in the course of proving the main theorem.
\begin{lemma} \label{reduce}
Let $G$ be a connected graph with $n$ vertices and $m$ edges.
If $n\geq 15$ and $n+8\leq m\leq \min\{ 2n^{3/2}, \binom{n}{2}\}$, then
\[
\frac{q(G)}{R(G)} < \frac{n}{\sqrt{n-1}}.
\]
\end{lemma}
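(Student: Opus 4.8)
\emph{Proof proposal.}
The plan is to feed the two quoted spectral bounds into Lemma~\ref{LeBasic}. Since $G$ is connected and $n\ge 15$, we have $\delta(G)\ge 1$, so Theorem~\ref{ThHong} gives $\lambda_1(G)\le\sqrt{2m-n+1}$, and Theorem~\ref{Th} gives $R(G)\ge m/\lambda_1(G)\ge \frac{m}{\sqrt{2m-n+1}}$. As $m\ge n+8>n$, Lemma~\ref{LeBasic} reduces the claim to proving
\[
\frac{m}{\sqrt{2m-n+1}}\;>\;\sqrt{n-1}+\frac{2m-2n+2}{n\sqrt{n-1}}\;=\;\frac{(n-1)(n-2)+2m}{n\sqrt{n-1}}.
\]
Both sides are positive, so cross-multiplying and squaring makes this equivalent to $C(m)>0$, where
\[
C(m):=n^2(n-1)m^2-(2m-n+1)\bigl[(n-1)(n-2)+2m\bigr]^2 .
\]

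The key algebraic step is that $m=n-1$ is a root of the cubic $C(m)$ (natural to test, since $n-1$ is the edge count of a spanning tree), and a direct if slightly tedious division yields the factorization
\[
C(m)=(m-n+1)\Bigl[-8m^2+(n-1)(n-2)(n-6)\,m-(n-1)^2(n-2)^2\Bigr].
\]
For $m\ge n+8$ the factor $m-n+1\ge 9$ is positive, so the problem becomes the quadratic inequality $8m^2-(n-1)(n-2)(n-6)m+(n-1)^2(n-2)^2<0$. For $n\ge 15$ the discriminant $(n-1)^2(n-2)^2\bigl[(n-6)^2-32\bigr]$ is positive, so this quadratic (with positive leading coefficient) is negative precisely on the open interval $(m_-,m_+)$ with
\[
m_\pm=\frac{(n-1)(n-2)\bigl[(n-6)\pm\sqrt{(n-6)^2-32}\,\bigr]}{16}.
\]
Thus everything reduces to verifying the inclusion $[\,n+8,\,2n^{3/2}\,]\subset(m_-,m_+)$ for all $n\ge 15$; note that since $m\le\min\{2n^{3/2},\binom{n}{2}\}\le 2n^{3/2}$, only the upper bound $m\le 2n^{3/2}$ is used.

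For the lower endpoint, rationalizing gives $m_-=\dfrac{2(n-1)(n-2)}{(n-6)+\sqrt{(n-6)^2-32}}$, and $m_-<n+8$ reduces, after clearing the denominator and squaring once, to a cubic polynomial inequality in $n$ that holds for all $n\ge 15$ (and first fails at $n=14$, matching the hypothesis). For the upper endpoint, using $\sqrt{(n-6)^2-32}\ge (n-6)/\sqrt2$ (valid once $(n-6)^2\ge 64$, i.e.\ $n\ge 14$) gives $m_+\ge\frac{(n-1)(n-2)(n-6)(1+1/\sqrt2)}{16}$, and $2n^{3/2}<m_+$ then follows from a comparison of the shape $(n-1)(n-2)(n-6)>c\,n^{3/2}$ with an explicit constant $c<19$, again valid for all $n\ge 15$.

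The conceptual part (Hong $+$ Favaron--Mah\'eo--Sacl\'e $+$ Lemma~\ref{LeBasic}, then squaring to $C(m)>0$) is immediate. I expect the real work to be twofold: (i) spotting the root $m=n-1$ and getting the factorization of $C(m)$ right, and (ii) the two one-variable endpoint checks, which after squaring become degree-four inequalities in $n$ --- routine, but this is exactly where the threshold $n\ge 15$ and the range bound $m\le 2n^{3/2}$ are consumed, so the estimates must be done carefully rather than crudely.
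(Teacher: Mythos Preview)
Your proposal is correct, and the conceptual scaffolding matches the paper's exactly: the paper also combines Theorem~\ref{ThHong}, Theorem~\ref{Th}, and Lemma~\ref{LeBasic}, and after clearing denominators and squaring arrives at precisely your cubic $C(m)$ (there called $h(m)$). The approaches diverge in how the cubic inequality $C(m)>0$ is established. The paper does not spot the root $m=n-1$; instead it handles $15\le n\le 17$ by a direct computer check of $f(m)>0$, and for $n\ge 18$ argues via calculus that $h(n+8)>0$ and $h'(m)>0$ on $[n+8,2n^{3/2}]$ (using concavity of $h'$ and checking $h'$ at both endpoints). Your factorization $C(m)=(m-n+1)\bigl[-8m^2+(n-1)(n-2)(n-6)m-(n-1)^2(n-2)^2\bigr]$ is cleaner: it reduces a cubic to a quadratic with explicit roots, treats all $n\ge15$ uniformly without a separate small-$n$ computer verification, and makes transparent exactly why the threshold is $n=15$ (namely $Q(n+8)$ changes sign between $n=14$ and $n=15$). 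The paper's approach, by contrast, requires less algebraic insight and is more mechanical.
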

\begin{proof}
We note $2n^{3/2}> \binom{n}{2}$ when $15 \leq n \leq 17$ and $2n^{3/2}< \binom{n}{2}$  when $n \geq 18$. We define a function
\[
f(m)=\frac{m}{\sqrt{2m-(n-1)}}-\sqrt{n-1}-\frac{2m-2(n-1)}{n\sqrt{n-1}}.
\]
With the help of computer, one can check $f(m)>0$ for $15 \leq n \leq 17$ and $n+8 \leq m \leq \binom{n}{2}$. We assume $n \geq 18$ for the rest of the proof and  $\min\{ 2n^{3/2}, \binom{n}{2}\}=2n^{3/2}$ in this case.
We also consider a relevant  function
\[
g(m)=mn\sqrt{n-1}-n(n-1)\sqrt{2m-(n-1)}-(2m-2(n-1))\sqrt{2m-(n-1)}.
\]
To show $f(m) >0$,  it suffices to show $g(m) > 0$ for $ n+8 \leq m \leq 2n^{3/2}$ and $n \geq 18$ as $2m-(n-1)>0$.   Let $A=mn\sqrt{n-1}$ and $B=n(n-1)\sqrt{2m-(n-1)}+(2m-2(n-1))\sqrt{2m-(n-1)}=(2m+n^2-3n+2) \sqrt{2m-(n-1)}$. We define
\[
h(m)=A^2-B^2=m^2n^2(n-1)-(2m+n^2-3n+2)^2(2m-(n-1)).
\]
It is equivalent to prove $h(m)>0$ for  $n+8 \leq m \leq 2n^{3/2}$ and $n \geq 18$.  We first show $h(n+8)>0$ for $n \geq 18$.  We note
\[
h(n+8)=45n^3-657n^2+288n-5508.
\]
We can show $45n^3-657n^2+288n-5508 >0$ when $n=18$ directly.
By  taking the derivative, we can prove that  $45n^3-657n^2+288n-5508$ is increasing when $n \geq 18$, which completes the proof of   $h(n+8)>0$ for all $n \geq 18$.

We next show for fixed $n \geq 18$, the function $h(m)$ is increasing when $n+8 \leq m \leq 2n^{3/2}$.
 The derivative of $h(m)$  satisfies
\[
h'(m)=2mn^2(n-1)-4(n^2+2m-3n+2)(2m-n+1)-2(n^2+2m-3n+2)^2.
\]
It is enough to show $h'(m)>0$ for $n+8 \leq m \leq 2n^{3/2}$ and $n \geq 18$.  Let $l(m)=h'(m)$.   Taking derivative, we have
 \[
 l'(m)=2n^3-18n^2-48m+56n-40.
 \]
Also, the second derivative   $l''(m)=-48$. Therefore, the function $l(m)$ is concave down. If we can show $l(n+8)>0$ and $l(2n^{3/2})>0$ for $n \geq 18$,  then we establish $l(m)>0$ for all $n+8 \leq m \leq 2n^{3/2}$.
  We notice $l(n+8)=14n^3-154n^2+68n-1872>0$ when $n \geq 18$. We get
  \[
  l(2n^{3/2})=4n^{9/2}-2n^4-36n^{7/2}-80n^3+112n^{5/2}-42n^2-80n^{3/2}+44n-16.
  \]
One can confirm $l(2n^{3/2})>0$ for $n=18$ and $l(2n^{3/2})$ is increasing when $n \geq 18$ easily by taking derivative.  We already proved $l(m)=h'(m)>0$ when $n+8 \leq m \leq 2n^{3/2}$ and $n \geq 18$.  Combining with $h(n+8)>0$, we get $h(m)>0$  when $n+8 \leq m \leq 2n^{3/2}$ and $n \geq 18$. Thus, $f(m)>0$  for $n\geq 15$ and $n+8\leq m\leq \min\{ 2n^{3/2}, \binom{n}{2}\}$, that is,
\[
\frac{m}{\sqrt{2m-(n-1)}}>\sqrt{n-1}-\frac{2m-2(n-1)}{n\sqrt{n-1}}.
\]
By Theorems \ref{Th} and \ref{ThHong}, we have
\[
R(G) \geq \frac{m}{\lambda_1} \geq \frac{m}{\sqrt{2m-(n-1)}} > \sqrt{n-1}+\frac{2m-2(n-1)}{n\sqrt{n-1}}.
\]
By Lemma \ref{LeBasic}, we get
\[
\frac{q(G)}{R(G)} < \frac{n}{\sqrt{n-1}}.
\]
\end{proof}
Similar  to the lemma above, we can  prove  the following one.
\begin{lemma} \label{1314}
Let $G$ be a connected graph with $n$ vertices and $m$ edges.
If $n=13$ and $24\leq m\leq 78=\binom{13}{2}$, or $n=14$ and $23\leq m\leq 91=\binom{14}{2}$, then
\[
\frac{q(G)}{R(G)} < \frac{n}{\sqrt{n-1}}.
\]
\end{lemma}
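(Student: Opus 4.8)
The plan is to imitate the proof of Lemma~\ref{reduce} almost verbatim; the only difference is that here $n$ is one of the two fixed values $13$ or $14$ and $m$ ranges over a finite set of integers, so the analytic estimates on the auxiliary function collapse to a finite verification. Keep the same function
\[
f(m)=\frac{m}{\sqrt{2m-(n-1)}}-\sqrt{n-1}-\frac{2m-2(n-1)}{n\sqrt{n-1}}.
\]
Since $G$ is connected we have $\delta(G)\geq 1$, so Theorem~\ref{ThHong} gives $\lambda_1\leq\sqrt{2m-n+1}$, and Theorem~\ref{Th} gives $\lambda_1\geq m/R(G)$; combining these, $R(G)\geq m/\sqrt{2m-(n-1)}$. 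If we can show $f(m)>0$, then
\[
R(G)\geq \frac{m}{\sqrt{2m-(n-1)}} > \sqrt{n-1}+\frac{2m-2(n-1)}{n\sqrt{n-1}},
\]
and since in both cases $m\geq n$ (indeed $m\geq 23>14\geq n$), Lemma~\ref{LeBasic} yields $q(G)/R(G)<n/\sqrt{n-1}$.

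Thus the entire lemma reduces to checking $f(m)>0$ for the finitely many pairs $(n,m)$ with $n=13$, $24\leq m\leq 78$, and with $n=14$, $23\leq m\leq 91$. I would carry this out directly with a computer, exactly as the positivity of $f$ was verified for $15\leq n\leq 17$ in the proof of Lemma~\ref{reduce}. Alternatively, one can avoid brute force by the same route as there: since $2m-(n-1)>0$, it suffices to show the polynomial
\[
h(m)=m^2n^2(n-1)-(2m+n^2-3n+2)^2\bigl(2m-(n-1)\bigr)
\]
is positive on the interval; for each fixed $n\in\{13,14\}$ one evaluates $h$ at the left endpoint ($m=24$ when $n=13$, $m=23$ when $n=14$) and checks that $h'(m)>0$ throughout the interval, using that $h'$ is concave (its second derivative with respect to $m$ is a negative constant) so that positivity of $h'$ at the two endpoints of the range suffices.

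The only point requiring genuine care is confirming that the stated left endpoints $24$ and $23$ are indeed the smallest values of $m$ for which $f(m)>0$ at $n=13$ and $n=14$ respectively; this is presumably the reason these two values of $n$ are excluded from Lemma~\ref{reduce} (where the range begins at $m=n+8$) and treated separately here. Beyond verifying this and performing the finite computation accurately, there is no real mathematical obstacle: the structure of the argument, and every inequality used, is identical to the proof of Lemma~\ref{reduce}.
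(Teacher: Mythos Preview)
Your proposal is correct and follows exactly the approach the paper uses: the paper's own proof of this lemma is a single sentence stating that one checks $f(m)>0$ by computer and then invokes Theorems~\ref{Th}, \ref{ThHong}, and Lemma~\ref{LeBasic}. Your write-up simply spells out this chain of implications in full, so there is nothing to add.
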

We can show $f(m)>0$ using computer very easily, which is sufficient to prove the lemma by noticing Theorems \ref{Th}, \ref{ThHong}, and \ref{LeBasic}.

If a graph is relatively dense, then we can show the desired upper bound for $\tfrac{q(G)}{R(G)}$ easily  by  the following lemma.
\begin{lemma} \label{dense}
Let $G$ be a connected graph with $n$ vertices and $m$ edges.
If $m\geq  2n^{3/2}$, then
\[
\frac{q(G)}{R(G)} < \frac{n}{\sqrt{n-1}}.
\]
\end{lemma}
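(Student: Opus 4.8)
The plan is to combine an elementary lower bound on $R(G)$ with Lemma~\ref{LeBasic}. Since $G$ has $n$ vertices every degree is at most $n-1$, so each edge $\{u,v\}$ contributes at least $\tfrac1{\sqrt{(n-1)(n-1)}}=\tfrac1{n-1}$ to the Randi\'c index, and therefore
\[
R(G)\ \ge\ \frac{m}{n-1}.
\]
Because $m\ge 2n^{3/2}>n$, Lemma~\ref{LeBasic} applies, so it is enough to prove the strict inequality $\tfrac{m}{n-1}>\sqrt{n-1}+\tfrac{2m-2n+2}{n\sqrt{n-1}}$.

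I would next simplify the target inequality. Bringing the right-hand side over the common denominator $n\sqrt{n-1}$ turns it into $\tfrac{2m+(n-1)(n-2)}{n\sqrt{n-1}}$, and a short manipulation (clearing denominators and using the identity $n-2\sqrt{n-1}=(\sqrt{n-1}-1)^2$) shows that the inequality to be proved is equivalent to
\[
m\,(\sqrt{n-1}-1)^2\ >\ (n-1)^{3/2}(n-2).
\]
Since $m\ge 2n^{3/2}$, it now suffices to establish $2n^{3/2}(\sqrt{n-1}-1)^2>(n-1)^{3/2}(n-2)$. At this point I would observe that the hypotheses $2n^{3/2}\le m\le\binom{n}{2}$ already force $n\ge 18$ (for $n\le 17$ one has $2n^{3/2}>\binom{n}{2}$, so there is no such graph and the lemma holds vacuously); and for $n\ge 18$ the remaining inequality follows from crude estimates. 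Indeed $\sqrt{n-1}\ge\sqrt{17}$ gives
\[
(\sqrt{n-1}-1)^2=(n-1)\Bigl(1-\tfrac{1}{\sqrt{n-1}}\Bigr)^2\ \ge\ (n-1)\Bigl(1-\tfrac{1}{\sqrt{17}}\Bigr)^2\ >\ \tfrac12(n-1),
\]
so $2n^{3/2}(\sqrt{n-1}-1)^2>n^{3/2}(n-1)>(n-1)^{3/2}(n-2)$, the last step because $n^{3/2}>(n-1)^{3/2}$ and $n-1>n-2$.

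The argument is short; the only mild subtlety is recognizing that the very crude bound $R(G)\ge m/(n-1)$ already suffices here, together with the bookkeeping that pins $n$ down to $n\ge 18$. One could alternatively feed the sharper estimate $R(G)\ge m/\sqrt{2m-n+1}$ from Theorems~\ref{Th} and~\ref{ThHong} into Lemma~\ref{LeBasic} — this is precisely the inequality $f(m)>0$, equivalently $h(m)>0$, from the proof of Lemma~\ref{reduce} — but then, since $h$ is a cubic in $m$ with negative leading coefficient, one has to control its behaviour on the whole range $2n^{3/2}\le m\le\binom{n}{2}$ (for instance by locating its largest real root), which is a bit more delicate than the route sketched above.
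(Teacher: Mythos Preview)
Your proof is correct, but it takes a longer route than the paper's. You share the paper's key observation $R(G)\ge m/(n-1)\ge 2n^{3/2}/(n-1)$, yet you then feed this into Lemma~\ref{LeBasic} (which rests on the Feng--Yu bound $q(G)\le \tfrac{2m}{n-1}+n-2$), forcing you to clear denominators, use the identity $n-2\sqrt{n-1}=(\sqrt{n-1}-1)^2$, and pin down $n\ge 18$ before the final estimate goes through. The paper instead combines $R(G)\ge 2n^{3/2}/(n-1)$ with the trivial bound $q(G)\le 2\Delta\le 2(n-1)$ to get
\[
\frac{q(G)}{R(G)}\le \frac{2(n-1)}{2n^{3/2}/(n-1)}=\frac{(n-1)^2}{n^{3/2}}<\frac{n}{\sqrt{n-1}},
\]
the last step being just $(n-1)^{5/2}<n^{5/2}$. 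So the paper's argument is a two-line ratio computation that never needs Lemma~\ref{LeBasic} or any case analysis on $n$; your detour through Lemma~\ref{LeBasic} buys nothing here, since for dense graphs the crude $q(G)\le 2(n-1)$ is already at least as strong as the Feng--Yu bound.
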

\begin{proof}
If $m\geq 2n^{3/2}$, then by the definition of $R(G)$, we have
\begin{align}
R(G)=\sum_{xy\in E(G)}\frac{1}{\sqrt{d(x)d(y)}}\geq \frac{m}{n-1}\geq \frac{2n^{3/2}}{n-1}.
\end{align}
Recall the well-known fact  $q(G)\leq 2\Delta\leq 2(n-1)$. Thus, we have
\[
\frac{q(G)}{R(G)}\leq\frac{(n-1)^2}{n^{3/2}} < \frac{n}{\sqrt{n-1}}.
\]
\end{proof}
In the case of  graphs with small maximum degree, the following lemma will prove the main theorem.
\begin{lemma}\label{small}
Let $G$ be a connected graph with $n$ vertices. If $\Delta(G) < n/2$, then we have
\[
\frac{q(G)}{R(G)} < \frac{n}{\sqrt{n-1}}.
\]
\end{lemma}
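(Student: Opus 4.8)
The plan is to bound the numerator and the denominator separately, using only facts already recorded above. Since $G$ is connected on $n\geq 2$ vertices, we have $\delta(G)\geq 1$, so Theorem \ref{ThBE} applies and gives the lower bound $R(G)\geq\sqrt{n-1}$.

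For the numerator I would invoke the standard estimate $q(G)\leq 2\Delta(G)$. This can be read off directly from Theorem \ref{ThMerris}: for every vertex $v$ one has
\[
m(v)=\frac{1}{d(v)}\sum_{u\in N(v)}d(u)\leq \Delta(G),
\]
hence $d(v)+m(v)\leq 2\Delta(G)$ for each $v$, and taking the maximum yields $q(G)\leq 2\Delta(G)$. (Alternatively this is Gershgorin's theorem applied to $Q=D+A$, whose $v$-th row sum equals $2d(v)$.)

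Combining the two estimates gives
\[
\frac{q(G)}{R(G)}\leq\frac{2\Delta(G)}{\sqrt{n-1}},
\]
and the hypothesis $\Delta(G)<n/2$ forces $2\Delta(G)<n$, so $\frac{q(G)}{R(G)}<\frac{n}{\sqrt{n-1}}$, as claimed. There is essentially no obstacle in this argument; the only points worth a sentence are to note that connectedness guarantees $\delta(G)\geq 1$ so that Theorem \ref{ThBE} is applicable, and that the inequality $2\Delta(G)<n$ is strict, which is exactly why the conclusion is strict and why no extremal graph can occur under this hypothesis (indeed both $K_n$ and $S_n$ have $\Delta=n-1\geq n/2$).
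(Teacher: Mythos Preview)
Your proof is correct and matches the paper's own argument essentially line for line: the paper simply notes $q(G)\leq 2\Delta<n$ and $R(G)\geq\sqrt{n-1}$ by Theorem~\ref{ThBE}, then concludes. Your only addition is an explicit justification of $q(G)\leq 2\Delta(G)$ via Theorem~\ref{ThMerris}, which the paper treats as a well-known fact.
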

\begin{proof}
We note $q(G) \leq 2\Delta <n$ and   $R(G)\geq \sqrt{n-1}$ by Theorem \ref{ThBE}.  We get $\frac{q(G)}{R(G)} < \frac{n}{\sqrt{n-1}}$.
\end{proof}
With strong assumptions on the maximum degree and the number of edges of a graph, we are able to establish the desired upper bound on $\tfrac{q(G)}{R(G)}$.
\begin{lemma} \label{largedegree}
Let $G$ be a connected graph with $n \geq 13$ vertices. If either of the following cases holds:\\
(1) $n/2\leq \Delta\leq n-4$ and $e(G)=n+k$ for $1\leq k\leq 10$;\\
(2) $\Delta=n-3$ and $e(G)=n+k$ for $1\leq k\leq 7$;\\
(3) $\Delta=n-2$ and $e(G)=n+k$ for $1\leq k\leq 4$,\\
then
\[
\frac{q(G)}{R(G)} < \frac{n}{\sqrt{n-1}}.
\]
\end{lemma}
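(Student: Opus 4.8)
The plan is to reduce, in all three cases, to the single Randi\'c-index estimate
\[
R(G)>\sqrt{n-1}+\frac{2(k+1)}{n\sqrt{n-1}}.
\]
Indeed, writing $m=n+k$ we have $2m-2n+2=2(k+1)$, so once this is known Lemma~\ref{LeBasic} gives $\tfrac{q(G)}{R(G)}<\tfrac{n}{\sqrt{n-1}}$. In each of (1)--(3) we have $1\le k\le 10$ and $n\ge 13$, so if $\delta(G)\ge 2$ the estimate follows at once from Lemma~\ref{LeMin2}. Hence the real content is the case $\delta(G)=1$, where the hypotheses on $\Delta$ and $k$ come into play.

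So assume $\delta(G)=1$. The boundary value $\Delta=n/2$ of case (1) is disposed of separately: there $G$ is not $\tfrac n2$-regular (it has only $n+k\le n+10$ edges), so by Theorem~\ref{ThMerris} $q(G)<2\Delta=n$, while $R(G)\ge\sqrt{n-1}$ by Theorem~\ref{ThBE}, which already suffices. For the remaining range I would peel off vertices of degree one one at a time, obtaining a chain $G=G_0,G_1,\dots,G_s=G^{\ast}$ with $v_i$ of degree one in $G_{i-1}$ and $G^{\ast}$ the $2$-core of $G$: it is connected (as $m\ge n+1$ forces a cycle, which the peeling never destroys), has $\delta(G^{\ast})\ge 2$, and has $n^{\ast}:=|V(G^{\ast})|$ vertices and $n^{\ast}+k$ edges. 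If $n^{\ast}\ge 12$, then Lemma~\ref{LeMin2} applied to $G^{\ast}$ gives $R(G^{\ast})>\sqrt{n^{\ast}-1}+\tfrac{2(k+1)}{n^{\ast}\sqrt{n^{\ast}-1}}$, and applying Lemma~\ref{LeInduBas} once for each step $G_i\to G_{i-1}$ (here $G_{i-1}$ has $|V(G_i)|+1$ vertices, $(|V(G_i)|+1)+k$ edges, and $v_i$ has degree one in it) telescopes to exactly the desired bound for $R(G)$.

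The remaining, and hardest, case is $n^{\ast}\le 11$, where Lemma~\ref{LeInduBas} cannot be iterated because its hypothesis genuinely fails for some small graphs of minimum degree at least two. Here I would estimate $R(G)$ directly. Let $v_0$ have degree $\Delta$ and write $R(G)=R_0+R_1$, where $R_0=\sum_{u\in N(v_0)}\tfrac{1}{\sqrt{\Delta\,d(u)}}$ is the contribution of the $\Delta$ edges at $v_0$. Counting edges, and using that only $n-1-\Delta$ vertices lie outside $\{v_0\}\cup N(v_0)$ and that $G$ is connected, one sees that only about $k$ edges avoid $v_0$; combined with $\Delta\ge n/2$ this forces $v_0$ to have many pendant neighbours, hence $R_0$ close to $\sqrt{\Delta}$. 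Bounding the rest below via Proposition~\ref{delete} (or Theorem~\ref{ThHV}) applied to the remaining peeled vertices together with $R(G^{\ast})>\sqrt{n^{\ast}-1}$, or $R(G^{\ast})\ge R(S^{\ast}_{n^{\ast}})$ from Theorem~\ref{ThGL} when $G^{\ast}$ is unicyclic, one checks that this surplus outweighs the deficit $\sqrt{n-1}-\sqrt{\Delta}$; the case-dependent bounds $k\le 10,\,7,\,4$ for $\Delta\le n-4$, $\Delta=n-3$, $\Delta=n-2$ are exactly what keeps the margin positive. The main obstacle is precisely this last step: once the $2$-core is small the clean induction breaks down, and one must carefully balance the gain from the roughly $\Delta$ pendant edges at $v_0$ against the loss coming from $\Delta<n-1$ and from how much the denser part of $G$ can absorb.
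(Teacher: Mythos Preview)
Your approach is genuinely different from the paper's, and the final paragraph is a real gap, not just a sketch. You try to prove the strong Randi\'c bound $R(G)>\sqrt{n-1}+\tfrac{2(k+1)}{n\sqrt{n-1}}$ and then invoke Lemma~\ref{LeBasic}. The paper does the opposite: it leaves $R(G)$ alone (using only $R(G)>\sqrt{n-1}$ from Theorem~\ref{ThBE}) and instead proves $q(G)\le n$ directly from Theorem~\ref{ThMerris}. Writing $t(v)=d(v)+m(v)$ and using $\sum_w d(w)=2(n+k)$ together with the fact that the $n-1-d(v)$ vertices outside $\{v\}\cup N(v)$ each contribute at least~$1$, one gets $t(v)\le d(v)+\tfrac{n+2k+1}{d(v)}$. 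The convex function $x\mapsto x+\tfrac{n+2k+1}{x}$ is then maximised at the endpoints $x=4$ and $x=\Delta$ of the range $[4,\Delta]$, and the case-by-case upper bounds $(\Delta,k)\in\{(\le n-4,\le 10),(n-3,\le 7),(n-2,\le 4)\}$ are exactly what makes $\Delta+\tfrac{n+2k+1}{\Delta}\le n$; degrees $1,2,3$ are handled by hand. This is where the hypotheses on $\Delta$ and $k$ are actually consumed, and the argument is short and uniform.

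In your route, by contrast, the hypotheses are not used at all in the clean part ($\delta\ge 2$, or $n^{\ast}\ge 12$); they only enter in the small-$2$-core case, where your argument is not a proof. The claims ``$R_0$ close to $\sqrt{\Delta}$'' and ``this surplus outweighs the deficit $\sqrt{n-1}-\sqrt{\Delta}$'' are not substantiated: with $\Delta\le n-4$ the gap $\sqrt{n-1}-\sqrt{\Delta}$ is of order $\tfrac{1}{\sqrt{n}}$ times a constant at least~$3$, while the edges not incident to $v_0$ can number up to $k+(n-1-\Delta)$, so the ``about $k$'' heuristic undercounts, and neither Proposition~\ref{delete} nor Theorem~\ref{ThGL} obviously closes the gap without a genuine case check. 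You also cite Theorem~\ref{ThMerris} for the strict inequality $q(G)<2\Delta$ when $G$ is non-regular at $\Delta=n/2$; that fact is true but does not follow from the statement of Theorem~\ref{ThMerris} as given (the maximum of $d(v)+m(v)$ can equal $2\Delta$ in non-regular graphs). The simplest fix is to abandon the Randi\'c-side attack here and bound $q(G)$ instead, which is precisely what the paper does.
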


\begin{proof}
We use Theorem \ref{ThMerris} to show $q(G) \leq n$.
 For any vertex $v\in V(G)$, let $S_v=V(G) \setminus (\{v\} \cup N(v))$.
We shall show
\[
t(v):=d(v)+m(v) \leq n
\]
for each $v\in V(G)$.  We note $2(n+k)=\sum_{w \in V(G)} d(w)=d_v+\sum_{w \in N(v)} d_w+\sum_{w \in S_v} d_w$.
Therefore,  we have
\begin{align*}
t(v) & = d(v)+\frac{2(n+k)-d(v)-\sum_{u\in S_v}d(u)}{d(v)}\\
&\leq d(v)+\frac{2(n+k)-d(v)-(n-1-d(v))}{d(v)}\\
&=d(v)+\frac{n+2k+1}{d(v)}.
\end{align*}
Consider the function $f(x)=x+\frac{n+2k+1}{x}$.
We know $f(x)$ is increasing when $x\in (\sqrt{n+2k+1},\infty)$
and decreasing when $x\in (1,\sqrt{n+2k+1})$. Furthermore,
for any vertex $v$ with degree at least 4, we have
\[
t(v)\leq \max \left\{\Delta+\frac{n+2k+1}{\Delta},4+\frac{n+2k+1}{4}\right\}\leq\max\left\{\Delta+\frac{n+2k+1}{\Delta},n\right\}.
\]
Here we note $4+\tfrac{n+2k+1}{4} \leq n$ when $1 \leq k \leq 10$ and $n \geq 13$.
Suppose  (1) holds.
When $n\geq 13$ and $k\leq 10$, we have
\[
(n-4)\geq \Delta \geq n/2 > \sqrt{n+21}\geq \sqrt{n+2k+1}.
\]
Thus
\[
\Delta+\frac{n+2k+1}{\Delta}\leq (n-4)+\frac{n+21}{n-4} <  n
\]
when $n\geq 13$.
If (2) holds, then we get
\[
\Delta+\frac{n+2k+1}{\Delta}\leq (n-3)+\frac{n+15}{n-3} < n
\]
when $n\geq 13$. If (3) holds, then we obtain
\[
\Delta+\frac{n+2k+1}{\Delta}\leq (n-2)+\frac{n+9}{n-2}\leq n
\]
when $n\geq 13$.

Now we  need only to consider the vertices with degree 1, or 2, or 3. If $d(v)=1$,
then $t(v)=d(v)+m(v)\leq 1+\Delta\leq n$. If $d(v)=2$, then $t(v)\leq 2+\Delta\leq 2+(n-2)=n$.
If $d(v)=3$ and $\Delta\leq n-3$, then $t(v)\leq 3+(n-3)=n$.  We are left with $d(v)=3$ and $\Delta=n-2$,
In this case, $k\leq 4$. Therefore,  $t(v)\leq 3+\frac{n+9}{3}\leq n$ when $n\geq 13$.

By Theorem \ref{ThBE}, we have $R(G) > \sqrt{n-1}$ if $G$ is connected and $e(G) \geq n$. Thus, $\frac{q(G)}{R(G)} < \frac{n}{\sqrt{n-1}}$.
\end{proof}
We need the following lemma for the case of $n=12$.
\begin{lemma} \label{n121}
Let $G$ be a connected graph with $12$ vertices. If either of the following cases holds:\\
(1) $6\leq \Delta(G)\leq 8$ and $e(G)=12+k$ for $1\leq k\leq 8$;\\
(2) $\Delta(G)=9$ and $e(G)=12+k$ for $1\leq k\leq 6$;\\
(3) $\Delta(G)=10$ and $e(G)=12+k$ for $1\leq k\leq 3$,\\
then
\[
\frac{q(G)}{R(G)} < \frac{12}{\sqrt{11}}.
\]
\end{lemma}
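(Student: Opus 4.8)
The plan is to mirror the proof of Lemma~\ref{largedegree} almost verbatim, specialised to $n=12$, noting that $\frac{12}{\sqrt{11}}=\frac{n}{\sqrt{n-1}}$ at $n=12$. First I would use Theorem~\ref{ThMerris} to reduce the claim to showing $q(G)\le 12$, and for that it suffices to prove $t(v):=d(v)+m(v)\le 12$ for every vertex $v$. Setting $S_v=V(G)\setminus(\{v\}\cup N(v))$, the degree-sum identity reads $2e(G)=2(12+k)=d(v)+\sum_{u\in N(v)}d(u)+\sum_{u\in S_v}d(u)$, and since $G$ is connected every vertex has degree at least $1$, so $\sum_{u\in S_v}d(u)\ge|S_v|=11-d(v)$. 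This gives
\[
t(v)=d(v)+\frac{2(12+k)-d(v)-\sum_{u\in S_v}d(u)}{d(v)}\le d(v)+\frac{13+2k}{d(v)}.
\]

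Next I would study $f(x)=x+\frac{13+2k}{x}$, which is convex, decreasing on $(0,\sqrt{13+2k})$ and increasing on $(\sqrt{13+2k},\infty)$. For a vertex $v$ with $d(v)\ge 4$, convexity of $f$ on $[4,\Delta]$ gives $t(v)\le\max\{\,4+\tfrac{13+2k}{4},\ \Delta+\tfrac{13+2k}{\Delta}\,\}$. The first term is at most $12$ since $k\le 8$ in all three cases, so $13+2k\le 29<32$. The second term I would check case by case: $\Delta=8$ with $k\le 8$ gives $8+\tfrac{29}{8}<12$; $\Delta=9$ with $k\le 6$ gives $9+\tfrac{25}{9}<12$; $\Delta=10$ with $k\le 3$ gives $10+\tfrac{19}{10}<12$; and the intermediate values $\Delta\in\{6,7\}$ in case~(1) only make $\Delta+\tfrac{13+2k}{\Delta}$ smaller. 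For $d(v)\in\{1,2\}$ I would use $m(v)\le\Delta$ to get $t(v)\le d(v)+\Delta\le 2+10=12$. For $d(v)=3$ I would use $t(v)\le 3+\Delta\le 12$ whenever $\Delta\le 9$; the remaining subcase $\Delta=10$ forces case~(3), hence $k\le 3$, and there I would instead invoke the sharper estimate $t(v)\le 3+\tfrac{13+2k}{3}\le 3+\tfrac{19}{3}<12$. Collecting these bounds yields $t(v)\le 12$ for every $v$, hence $q(G)\le 12$.

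To finish, observe that $e(G)=12+k\ge 13>11=e(S_{12})$, so $G$ is not the star on $12$ vertices; by Theorem~\ref{ThBE} we get $R(G)>\sqrt{11}$ strictly. Combining this with $q(G)\le 12$ gives $\frac{q(G)}{R(G)}<\frac{12}{\sqrt{11}}$, as required.

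The whole argument is essentially bookkeeping. The only place where the naive estimate fails is the degree-$3$ vertex in a graph with $\Delta=10$, where $3+\Delta=13>12$, so one genuinely needs the degree-sum refinement $t(v)\le 3+\tfrac{13+2k}{3}$ together with the hypothesis $k\le 3$ of case~(3); every other inequality is a direct numerical check. Accordingly I expect no real obstacle beyond carefully splitting into the degree ranges $d(v)\le 3$ and $d(v)\ge 4$ and into the three cases for $\Delta$.
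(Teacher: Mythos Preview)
Your proposal is correct and follows exactly the approach the paper intends: the authors state that the proof of Lemma~\ref{n121} is ``exactly the same as the one for proving Lemma~\ref{largedegree}'' specialised to $n=12$, and your argument reproduces that proof faithfully, including the split into $d(v)\ge 4$ versus $d(v)\in\{1,2,3\}$ and the use of the sharper degree-sum bound for the exceptional subcase $d(v)=3$, $\Delta=10$.
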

The proof of the lemma is exactly the same as the one for proving Lemma \ref{largedegree} and it is omitted here.

The next three lemmas will deal with those graphs with large maximum degree and  small number of edges.
\begin{lemma} \label{base13}
Let $G$ be a connected graph with $ 13$ vertices. If either of the following holds:
\begin{enumerate}
\item $\Delta(G)=12$ and $e(G)=13+k$ for $1\leq k\leq  10$;
\item $\Delta(G)=11$ and $e(G)=13+k$ for $ 5 \leq k \leq  10$;
\item $\Delta(G)=10$, and $e(G)=13+k$ for $8 \leq k \leq 10$,
\end{enumerate}
then we have
\[
R(G) > \sqrt{12}+\frac{2(k+1)}{13\sqrt{12}}.
\]
\end{lemma}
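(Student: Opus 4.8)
\noindent\emph{Sketch of the intended argument.} The plan starts from the observation that the displayed inequality $R(G)>\sqrt{12}+\frac{2(k+1)}{13\sqrt{12}}$ is exactly the conclusion of Lemma~\ref{LeMin2} when $n=13$, so if $\delta(G)\ge 2$ we are done at once (in all three cases $1\le k\le 10$). I would therefore assume $\delta(G)=1$ and peel off degree-one vertices one at a time, $G=G_0\supset G_1\supset\cdots\supset G_s$ with $G_i=G_{i-1}-v_i$, $d_{G_{i-1}}(v_i)=1$, stopping when $\delta(G_s)\ge 2$. Because $G$ is connected with $e(G)=13+k\ge 14>12$, it is not a forest, so the process terminates at a connected graph $G_s$ on $n':=13-s\ge 3$ vertices with $\delta(G_s)\ge 2$ and $e(G_s)=n'+k$ (each peel removes one vertex and one edge). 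Every $G_i$ with $i<s$ is connected, has a pendant vertex, and satisfies $e(G_i)=|V(G_i)|+k$ with $1\le k\le 10$, so Lemma~\ref{LeInduBas} applies to it; hence it suffices to prove
\[
R(G_s)>\sqrt{n'-1}+\frac{2(k+1)}{n'\sqrt{n'-1}},
\]
since then $s$ successive applications of Lemma~\ref{LeInduBas} give the claim for $G=G_0$.

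For $n'=12$ (which, since $\delta(G)=1$, is the only possibility with $n'\ge 12$) this is exactly Lemma~\ref{LeMin2} with $n=12$. For $n'$ still moderately large (essentially $n'\ge 9$, with the lone exception $n'=9$ and $k=10$), Theorem~\ref{ThDFR} gives $R(G_s)\ge\sqrt{2(n'-1)}+\frac{1}{n'-1}-\sqrt{\frac{2}{n'-1}}$, and a short computation using $k\le 10$ shows this already beats $\sqrt{n'-1}+\frac{2(k+1)}{n'\sqrt{n'-1}}$, so these cases are settled.

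The remaining cases have small $n'$, i.e.\ a large number $s$ of peeled leaves; here I would bound $R(G)$ directly by Proposition~\ref{delete},
\[
R(G)\ge\sum_{i=0}^{s-1}\frac{1}{2\sqrt{\Delta(G_i)}}+R(G_s)\ge\frac{s}{2\sqrt{\Delta}}+R(G_s),
\]
with $\Delta=\Delta(G)\le 12$; when $\Delta=12$ the maximum-degree vertex is universal, so peeling its pendant neighbours first yields $\Delta(G_i)=12-i$ for $i<s$ and a larger first term. The factor $R(G_s)$ would be estimated either by Theorem~\ref{ThDFR} or, when $e(G_s)=n'+k$ is large compared to $n'$ (exactly when $s$ is large and $G_s$ is almost complete), by Theorems~\ref{Th} and~\ref{ThHong} in the form $R(G_s)\ge\frac{n'+k}{\sqrt{n'+2k+1}}$. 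Since $s$ is bounded (by $9$ when $\Delta=12$, and smaller when $\Delta=11$ or $10$, because the hypotheses $k\ge 5$, resp.\ $k\ge 8$, force $n'$ to be small) and $1\le k\le 10$, only finitely many triples $(\Delta,s,k)$ remain, each of which is checked numerically against $\sqrt{12}+\frac{2(k+1)}{13\sqrt{12}}$; the pairs $(\Delta,k)$ excluded from the hypotheses of the present lemma are precisely those settled by Lemma~\ref{largedegree}.

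The hard part, I expect, is this last family of cases: for intermediate $s$ (roughly $4\le s\le 9$ when $\Delta=12$, and $s$ near $7$ or $8$ when $\Delta=11$) none of the three estimates is individually strong enough---the margins are of size about $10^{-2}$---so one must combine Proposition~\ref{delete} with whichever of Theorem~\ref{ThDFR} and Theorems~\ref{Th}--\ref{ThHong} is stronger for the given number of edges, and in the densest sub-cases replace the crude bound $\Delta(G_i)\le\Delta$ by the actual decreasing profile $\Delta(G_0)\ge\Delta(G_1)\ge\cdots$ of maximum degrees along the peeling sequence.
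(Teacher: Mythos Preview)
Your plan is correct and lands on essentially the same mechanism as the paper: reduce to $\delta(G)\ge 2$ via Lemma~\ref{LeMin2}, otherwise peel pendants down to a graph $G_s$ with $\delta(G_s)\ge 2$, bound $s$ in terms of $k$ through $e(G_s)\le\binom{n'}{2}$, and finish with a finite numerical check. The paper's proof is organized a bit more simply, though. It does \emph{not} invoke Lemma~\ref{LeInduBas} here at all; instead it uses Proposition~\ref{delete} together with Theorem~\ref{ThDFR} uniformly, obtaining the single inequality
\[
R(G)\ \ge\ \sum_{i=0}^{s-1}\frac{1}{2\sqrt{12-i}}+\sqrt{2(12-s)}+\frac{1}{12-s}-\sqrt{\frac{2}{12-s}}\ >\ \sqrt{12}+\frac{2(k+1)}{13\sqrt{12}},
\]
which is then verified by computer for every admissible pair $(k,s)$. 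The decreasing profile $\Delta(G_i)\le 12-i$ that you flag as a refinement ``for the densest sub-cases'' is in fact used throughout, and with it the inequality above already covers all the intermediate values of $s$ you were worried about; there is no need to bring in Theorems~\ref{Th} and~\ref{ThHong} for $R(G_s)$. The one case treated separately is $\Delta=12$, $k=1$, which the paper dispatches by listing the two possible graphs explicitly. Your detour through Lemma~\ref{LeInduBas} is not wrong, but it is exactly the device the paper reserves for the inductive step (Lemma~\ref{general}); using it here duplicates work. One small slip: larger $k$ forces $n'$ to be \emph{larger} (equivalently $s$ smaller), not smaller.
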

\begin{proof}
Since proofs of three cases are very similar, we will present the detailed proof of Case 1 and sketch  proofs of others.  For each case, we will assume $v_0$ is a vertex with the maximum degree and $N_G(v_0)=\{v_1,\ldots,v_{\Delta}\}$.  If $\delta(G) \geq 2$, then  Lemma \ref{LeMin2} will complete the proof. Thus, we assume $G$ has at least one vertex with degree one in each case.

\vspace{0.1cm}
\noindent
{\bf  Case 1:} $\Delta(G)=12$.  We first consider the case of   $k=1$, i.e., $e(G)=14$.  Let $H$ be the subgraph induced by $N_G(v_0)$. We have $H$  is either a $P_3$ together with 9 isolated vertices or
two disjoint edges together with 8 isolated vertices. For the former case,  we have
\[
R(G)=\frac{9}{\sqrt{12}}+\frac{2}{\sqrt{2\cdot 12}}+\frac{1}{\sqrt{3\cdot 12}}+\frac{2}{\sqrt{2\cdot 3}}>\sqrt{12}+\frac{4}{13\sqrt{12}}.
\]
For the latter case, we have
\[
R(G)=\frac{8}{\sqrt{12}}+\frac{4}{\sqrt{2\cdot 12}}+\frac{2}{\sqrt{2\cdot 2}}>\sqrt{12}+\frac{4}{13\sqrt{12}}.
\]
Next assume  $2\leq k\leq 10$.   Recall that $d(v_0)=12$ and $N_G(v_0)=\{v_1,\ldots,v_{12}\}$. Let  $\{v_1,v_2,\ldots,v_s\}$ be the set of vertices with degree one in $G$.

 When $k=10$,  we claim $1 \leq s \leq 6$.  Otherwise, $s \geq 7$.    Let $G'$ be the subgraph induced by $\{v_{s+1},\ldots,v_{12}\}$. We have $e(G') =e(G)-d(v_0)= 11$. Since $s \geq 7$, we have $|V(G')| \leq 5$.  However, $G'$ can have  at most $\binom{5}{2}=10$ edges, which is a contradiction.
 Repeating the argument above, we can show $s \leq 7 $ when   $6 \leq k \leq 9$. Similarly, we have $s \leq 8$ when $3\leq k \leq 5$.  In the case of  $k=2$, we have $s \leq 9$.

 We next apply Proposition \ref{delete} with $(v_1,\ldots,v_s)$. We observe that  $d_{G_i}(v_{i+1})=1$ and $\Delta(G_i)=12-i$ for $0 \leq i \leq s-1$. Moreover, $|V(G_s)|=13-s$  and  $\delta(G_s) \geq 2$.  Recalling Theorem \ref{ThDFR}, we have
 \begin{equation} \label{lb13}
 R(G) \geq \sum_{i=0}^{s-1} \frac{1}{2\sqrt{12-i}}+\sqrt{2(12-s)}+\frac{1}{12-s}-\sqrt{\frac{2}{12-s}}.
 \end{equation}
Since we have proved an upper bound on $s$ depending on the value of $k$,  the inequality
 \begin{equation}\label{lb131}
 \sum_{i=0}^{s-1} \frac{1}{2\sqrt{12-i}}+\sqrt{2(12-s)}+\frac{1}{12-s}-\sqrt{\frac{2}{12-s}} > \sqrt{12}+\frac{2(k+1)}{13\sqrt{12}}
 \end{equation}
  can be verified using  the computer for each  $k$.

\vspace{0.1cm}
 \noindent
 {\bf Case 2:}  $\Delta(G)=11$. Let $\{v_{12}\}=V(G) \setminus (\{v_0\} \cup N(v_0))$.  We have two subcases depending on the degree of $v_{12}$.

  {\bf Subcase 2.1:} $d(v_{12})=1$. Let $G_1=G-v_{12}$. If $\{v_1,\ldots,v_t\}$ is the set of vertices of degree one in $G_1$, then we can prove an upper bound on $s=t+1$ depending on the value of $k$ by the same argument as Case 1.   We apply Proposition \ref{delete} with $(v_{12},v_1,\ldots,v_{s-1})$. We  observe  $\Delta(G_i) \leq 12-i$ for $0 \leq i \leq s-1$, $|V(G_s)|=13-s$, and  $\delta(G_s) \geq 2$.  Therefore, inequalities \eqref{lb13} and \eqref{lb131} still hold for this case and we can prove the desired lower bound for $R(G)$ similarly.

 {\bf  Subcase 2.2:}  $d(v_{12}) \geq 2$.  Let $\{v_1,\ldots,v_s\}$ be the set of  vertices with degree one in $G$.  Repeating the argument for Case 1,  we can get the asserted lower bound on $R(G)$.
 Here, we note $\Delta(G_i) \leq 12-i$ for $0 \leq i \leq s-1$ still holds when we apply Proposition \ref{delete}.   We may have a smaller upper bound on $s$ than the one in Case 1 for the same value of $k$, which does not affect the result.

 \vspace{0.1cm}
\noindent
{\bf Case 3:}  $\Delta(G)=10$.   Let $\{v_{11}, v_{12}\}=V(G) \setminus (\{v_0\} \cup N(v_0))$.  We have two subcases.

  {\bf Subcase 3.1:} $d(v_{11}),d(v_{12}) \geq 2$.  Let $\{v_1,\ldots,v_s\}$ be the set of  vertices with degree one  in  $G$.   We  can repeat the argument in Subcase 2.2  to show the desired lower bound for $R(G)$.

  {\bf Subcase 3.2:}  Either $d(v_{11})=1$ or $d(v_{12}) =1$.  We assume $d(v_{11})=1$. Let $G_1=G-v_{11}$.

   If $d_{G_1}(v_{12})=1$, then we define $G_2=G_1-v_{12}$. Let $\{v_1,\ldots,v_t\}$ be the set of vertices with degree one  in  $G_2$. We can use the argument in Case 1 to show an upper bound on $s+2$ depending on the value of $k$. We apply Proposition \ref{delete} with $(v_{11},v_{12},v_1,\ldots,v_t)$. We still have   $\Delta(G_i) \leq 12-i$ for $0 \leq i \leq s-1$. Therefore,  inequalities \eqref{lb13} and \eqref{lb131}  are true and the claimed lower bound for $R(G)$ follows.

   If $d_{G_1}(v_{12}) \geq 2$,  then we can use the argument for Subcase 2.1 to complete the proof of this lemma.
\end{proof}
We will need the following lemma for $n=12$.
\begin{lemma} \label{n122}
Let $G$ be a connected graph with $ 12$ vertices. If either of the following holds:
\begin{enumerate}
\item $\Delta(G)=11$ and $e(G)=12+k$ for $1\leq k\leq  8$;
\item $\Delta(G)=10$ and $e(G)=12+k$ for $ 4 \leq k \leq  8$;
\item $\Delta(G)=9$, and $e(G)=12+k$ for $7 \leq k \leq 8$,
\end{enumerate}
then we have
\[
R(G) >  \sqrt{11}+\frac{2(k+1)}{12\sqrt{11}}.
\]
\end{lemma}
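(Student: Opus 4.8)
The plan is to mirror the proof of Lemma~\ref{base13} essentially verbatim, replacing $13$ throughout by $12$ and the target $\sqrt{12}+\tfrac{2(k+1)}{13\sqrt{12}}$ by $\sqrt{11}+\tfrac{2(k+1)}{12\sqrt{11}}$; the real content is that the resulting (rather tight) numerical inequalities still hold. First observe that $\sqrt{11}+\tfrac{2(k+1)}{12\sqrt{11}}$ is exactly the bound produced by Lemma~\ref{LeMin2} with $n=12$, and the hypotheses $n\ge 12$, $e(G)=12+k$, $1\le k\le 8\le 10$ are satisfied, so if $\delta(G)\ge 2$ we are done at once. Hence we may assume $G$ has a vertex of degree one. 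Fix $v_0$ with $d(v_0)=\Delta$ and write $N_G(v_0)=\{v_1,\dots,v_\Delta\}$: in Case~1 ($\Delta=11$) we have $V(G)=\{v_0\}\cup N(v_0)$, in Case~2 ($\Delta=10$) there is one vertex $v_{11}$ outside $\{v_0\}\cup N(v_0)$, and in Case~3 ($\Delta=9$) there are two such vertices $v_{10},v_{11}$.

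Next, strip leaves: set $G_0=G$ and, while $G_{i-1}$ has a vertex $v_i$ of degree one, let $G_i=G_{i-1}-v_i$, stopping at $G_s$ with $\delta(G_s)\ge 2$. Since $e(G_i)-|V(G_i)|=(12+k-i)-(12-i)=k\ge 1$, no $G_i$ is a forest, so the process terminates with $|V(G_s)|\ge 3$ and $\delta(G_s)\ge 2$. A graph on $12-i$ vertices has maximum degree at most $11-i$, so Proposition~\ref{delete} and then Theorem~\ref{ThDFR} applied to $G_s$ give
\[
R(G)\ \ge\ \sum_{i=0}^{s-1}\frac{1}{2\sqrt{11-i}}\ +\ \sqrt{2(11-s)}+\frac{1}{11-s}-\sqrt{\frac{2}{11-s}}\,;
\]
call the right-hand side $\Phi(s)$. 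A short computation shows that $\Phi$ is decreasing in $s$ on the range that can occur (the gain from the summand $\tfrac{1}{2\sqrt{11-s}}$ is smaller than the loss from shrinking the residual graph), so it suffices to bound $s$ from above and then verify $\Phi(s)>\sqrt{11}+\tfrac{2(k+1)}{12\sqrt{11}}$ at that bound.

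To bound $s$ we count the $e(G)-\Delta$ edges not incident to $v_0$, exactly as in Lemma~\ref{base13}: after first removing the one or two vertices outside $\{v_0\}\cup N(v_0)$ whenever they (or their iterated residuals) are leaves, the vertex $v_0$ survives all the stripping, and the edges missing $v_0$ are confined to the $11-s$ surviving vertices other than $v_0$, so $\binom{11-s}{2}\ge e(G)-\Delta\ge k+1$. In Case~1 this already forces $s\le 6$ when $k\ge 6$, $s\le 7$ when $3\le k\le 5$, and $s\le 8$ when $k\le 2$; in Cases~2 and~3, where $e(G)-\Delta=k+2$ and $k+3$ respectively, the same reasoning gives an at-least-as-strong bound on $s$ for every admissible $k$ (e.g. $s\le 5$ in Case~3 with $k=8$). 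One also has the crude bound $s\le 8$, improving to $s\le 7$ once $k\ge 3$, from $e(G_s)=12+k-s\le\binom{12-s}{2}$, but note this crude bound alone is not enough—the refined count via $v_0$ is what makes the $k\ge 6$ cases go through.

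Finally one checks the short finite list $\Phi(s_0(\Delta,k))>\sqrt{11}+\tfrac{2(k+1)}{12\sqrt{11}}$ by computer; alternatively, the case $k=1$ in Case~1 can be handled directly by writing out $R(G)$ for the two possible induced subgraphs on $N(v_0)$ (a $P_3$ or two disjoint edges, plus isolated vertices), as in Case~1 of Lemma~\ref{base13}. The step I expect to be delicate is precisely this verification: several inequalities are genuinely tight—for instance $\Phi(6)\approx 3.7752$ against $\tfrac{12.5}{\sqrt{11}}\approx 3.7689$ when $k=8$, and $\Phi(7)\approx 3.6403$ against $\tfrac{12}{\sqrt{11}}\approx 3.6183$ when $k=5$—so the case analysis over the location of the one or two vertices outside $\{v_0\}\cup N(v_0)$ must be organised carefully enough that the bound $s_0(\Delta,k)$ is always small enough for $\Phi(s_0)$ to beat the target.
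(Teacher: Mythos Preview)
Your proposal is correct and takes exactly the paper's approach: the paper itself states that this lemma ``uses the same argument as the proof of Lemma~\ref{base13}'' and omits the details, and you have carried out precisely that adaptation with $n=12$ in place of $n=13$. The only quibble is that the intermediate claim $\binom{11-s}{2}\ge e(G)-\Delta$ is slightly overstated in the subcases where an outside leaf is removed first (there one obtains only $\binom{11-s}{2}\ge k+1$, as in Case~1), but since you only use the consequence $\binom{11-s}{2}\ge k+1$ to bound $s$, this does not affect the argument.
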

We skip the proof here because it uses the same argument as the proof of Lemma \ref{base13}.

The next lemma  is in the same sprit of Lemma \ref{base13}.
\begin{lemma} \label{general}
Let $G$ be a connected graph with $n \geq 13$ vertices. If either of the following holds:
\begin{enumerate}
\item $\Delta(G)=n-1$ and $e(G)=n+k$ for $1\leq k\leq  10$;
\item $\Delta(G)=n-2$ and $e(G)=n+k$ for $ 5 \leq k \leq  10$;
\item $\Delta(G)=n-3$, and $e(G)=n+k$ for $8 \leq k \leq 10$,
\end{enumerate}
then we have
\[
R(G) >  \sqrt{n-1}+\frac{2(k+1)}{n\sqrt{n-1}}.
\]
\end{lemma}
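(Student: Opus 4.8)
The plan is to prove Lemma~\ref{general} by induction on $n$, taking as base case $n=13$, which is \emph{precisely} Lemma~\ref{base13}: setting $n=13$ in cases (1)--(3) here recovers cases (1)--(3) there, and the conclusion $R(G)>\sqrt{n-1}+\frac{2(k+1)}{n\sqrt{n-1}}$ becomes the conclusion $R(G)>\sqrt{12}+\frac{2(k+1)}{13\sqrt{12}}$ of Lemma~\ref{base13}. So for the inductive step I assume $n\geq 14$ and that the statement holds for $n-1$ (which is $\geq 13$).

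First I would dispose of the case $\delta(G)\geq 2$: since $n\geq 14\geq 12$ and $1\leq k\leq 10$ in all three cases, Lemma~\ref{LeMin2} immediately gives $R(G)>\sqrt{n-1}+\frac{2(k+1)}{n\sqrt{n-1}}$. Hence I may assume $G$ has a vertex $v$ with $d(v)=1$. Fix a maximum-degree vertex $v_0$; as $\Delta(G)\geq n-3\geq 11>1$ we have $v_0\neq v$, and removing $v$ lowers $d(v_0)$ by at most one, so $\Delta(G)-1\leq\Delta(G-v)\leq\Delta(G)$. Therefore $\Delta(G-v)\in\{n-4,n-3,n-2\}=\{(n-1)-3,(n-1)-2,(n-1)-1\}$; moreover $G-v$ is connected (a leaf was deleted), has $n-1$ vertices, and has $e(G-v)=(n-1)+k$ edges with the same $k$.

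The crux is then to confirm that $G-v$ satisfies the hypothesis of Lemma~\ref{general} for $n-1$ vertices; granting this, the induction hypothesis gives $R(G-v)>\sqrt{n-2}+\frac{2(k+1)}{(n-1)\sqrt{n-2}}$, and Lemma~\ref{LeInduBas} (applicable since $d(v)=1$) upgrades this to $R(G)>\sqrt{n-1}+\frac{2(k+1)}{n\sqrt{n-1}}$, closing the induction. To see the hypothesis holds, describe the three cases on an $N$-vertex graph as ``$\Delta=N-j$ and $k\in R_j$'' with $R_1=\{1,\dots,10\}$, $R_2=\{5,\dots,10\}$, $R_3=\{8,\dots,10\}$. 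If $G$ is in case $j$ on $n$ vertices, the degree inequality above puts $G-v$ in case $j$ or case $j-1$ on $n-1$ vertices (and never in a ``case $0$'', as a graph on $n-1$ vertices has maximum degree at most $(n-1)-1$), say case $j'$ with $1\leq j'\leq j$. Since the ranges are nested, $R_3\subseteq R_2\subseteq R_1$, and $k\in R_j$, we get $k\in R_{j'}$; so indeed $G-v$ meets the hypothesis.

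The only delicate point is exactly this last bookkeeping --- that a leaf deletion cannot move $(\Delta,k)$ outside the three admissible configurations --- and it is what dictates the specific $k$-ranges in the statement: they are nested (so the ``case index'' can only drop under leaf deletion) and, together with Lemma~\ref{largedegree}, they cover all of $1\leq k\leq 10$. I expect no real obstacle, since all the analysis is already carried by the base case Lemma~\ref{base13} and by Lemma~\ref{LeInduBas}; the induction merely carries it upward. (One could alternatively imitate the peeling argument of Lemma~\ref{base13} directly for arbitrary $n$, but then the finite computer check there would have to be replaced by an $n$-uniform lower bound for a partial sum $\sum_j \tfrac{1}{\sqrt j}$ arising from iterated uses of Theorem~\ref{ThHV}; the inductive route sidesteps this.)
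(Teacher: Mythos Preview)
Your proposal is correct and follows essentially the same route as the paper: induction on $n$ with base case Lemma~\ref{base13}, handling $\delta(G)\geq 2$ via Lemma~\ref{LeMin2}, and otherwise deleting a leaf and applying the inductive hypothesis together with Lemma~\ref{LeInduBas}. The only cosmetic difference is that the paper treats the three cases separately and chooses a specific leaf in each, whereas your nested-ranges observation $R_3\subseteq R_2\subseteq R_1$ lets you delete an arbitrary leaf and argue uniformly.
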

\begin{proof}
We prove the lemma by induction on $n$.  The base case $n=13$ is given by Lemma \ref{base13}.   We assume the lemma holds for $|V(G)|=n$. For the inductive step where  $|V(G)|=n+1$ ,  if $\delta(G) \geq 2$, then the lemma follows from Theorem \ref{LeMin2}. Thus we assume $G$ has at least one vertex with degree one.  We assume further $v_0$ is a vertex with maximum degree. We  have three cases.

\vspace{0.1cm}
\noindent
{\bf Case 1:} $\Delta(G)=|V(G)|-1=n$.  Let $v_1 \in N_G(v_0)$ be a vertex with degree one.  If we define $G'=G-v_1$, then we have $|V(G')|=n$ and $\Delta(G')=|V(G)'|-1$. We have
$R(G')  > \sqrt{n-1}+\tfrac{2(k+1)}{n\sqrt{n-1}}$ by the inductive hypothesis.    Lemma \ref{LeInduBas} completes the proof of this case.

\vspace{0.1cm}
\noindent
{\bf Case 2:} $\Delta(G)=|V(G)|-2=n-1$.  Assume $\{v_n\}=V(G) \setminus (\{v_0\} \cup N(v_0))$.  If $d(v_n)=1$, then we let $G'=G-v_n$. We get $V(G')=n$ and $\Delta(G')=|V(G')|-1$. If $d(v_n) \geq 2$, then let $v_1 \in N(v_0)$ such that $d(v_1)=1$. Set $G'=G-v_1$.  We have $V(G')=n$ and $\Delta(G')  \geq |V(G')|-2$. In either case, we have $R(G')  >  \sqrt{n-1}+\tfrac{2(k+1)}{n\sqrt{n-1}}$ by the inductive hypothesis. The inductive step then follows from  Lemma \ref{LeInduBas}.

\vspace{0.1cm}
\noindent
{\bf Case 3:} $\Delta(G)=|V(G)|-3=n-2$.  Assume $\{v_{n-1},v_n\}=V(G) \setminus (\{v_0\} \cup N(v_0))$.  If one of $v_{n-1}$ and $v_n$ has degree one, say $v_n$,  then we let $G'=G-v_n$. We observe
$V(G')=n$ and $\Delta(G')=|V(G')|-2$.   If $d(v_{n-1}),d(v_n) \geq 2$, then let $v_1 \in N(v_0)$ such that $d(v_1)=1$. Set $G'=G-v_1$.  We have $V(G')=n$ and $\Delta(G') \geq |V(G')|-3$. In either case,  the inductive hypothesis gives  $R(G') >  \sqrt{n-1}+\tfrac{2(k+1)}{n\sqrt{n-1}}$. We  can prove the inductive step by using  Lemma \ref{LeInduBas}.
\end{proof}
The combination of Lemma \ref{general}  and Lemma \ref{LeBasic} yields the next lemma.
\begin{lemma} \label{induction}
Let $G$ be a connected graph with $n \geq 13$ vertices. If either of the following holds:
\begin{enumerate}
\item $\Delta(G)=n-1$ and $e(G)=n+k$ for $1\leq k\leq  10$;
\item $\Delta(G)=n-2$ and $e(G)=n+k$ for $ 5 \leq k \leq  10$;
\item $\Delta(G)=n-3$, and $e(G)=n+k$ for $8 \leq k \leq 10$,
\end{enumerate}
then we have
\[
\frac{q(G)}{R(G)} < \frac{n}{\sqrt{n-1}}.
\]
\end{lemma}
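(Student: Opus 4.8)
The plan is to observe that Lemma~\ref{induction} is an immediate consequence of Lemma~\ref{general} and Lemma~\ref{LeBasic}, since the hypotheses on $\Delta(G)$ and the ranges of $k$ in the two lemmas are identical. First I would record that in all three cases $e(G)=n+k$ with $k\ge 1$, so $m:=e(G)=n+k\ge n$; this is precisely the mild hypothesis $m\ge n$ needed to invoke Lemma~\ref{LeBasic}.

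Next I would apply Lemma~\ref{general} verbatim: whichever of the three cases holds for $(G,n,k)$, it holds with the same parameters in Lemma~\ref{general}, so we obtain
\[
R(G) > \sqrt{n-1}+\frac{2(k+1)}{n\sqrt{n-1}}.
\]
The key bookkeeping step is to notice that $m=n+k$ gives $2m-2n+2=2(k+1)$, hence the lower bound just displayed is exactly
\[
R(G) > \sqrt{n-1}+\frac{2m-2n+2}{n\sqrt{n-1}},
\]
which is the hypothesis of Lemma~\ref{LeBasic}. Applying Lemma~\ref{LeBasic} then yields $\tfrac{q(G)}{R(G)} < \tfrac{n}{\sqrt{n-1}}$, completing the proof.

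There is no real obstacle here, as the substantive work has already been done: the induction establishing the Randi\'c-index lower bound lives in Lemma~\ref{general}, and the passage from such a lower bound to the bound on $q(G)/R(G)$ (via the Feng--Yu estimate $q(G)\le \tfrac{2m}{n-1}+n-2$) lives in Lemma~\ref{LeBasic}. The only thing to be careful about is that the case ranges for $k$ in Lemma~\ref{general} match those in Lemma~\ref{induction} exactly, so that no case requires a separate argument, and that the cheap inequality $m=n+k\ge n$ is noted so that Lemma~\ref{LeBasic} is applicable.
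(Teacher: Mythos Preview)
Your proposal is correct and follows exactly the same approach as the paper, which simply states that the combination of Lemma~\ref{general} and Lemma~\ref{LeBasic} yields Lemma~\ref{induction}. Your bookkeeping observation that $m=n+k$ gives $2m-2n+2=2(k+1)$, together with $m\ge n$, is precisely what is needed to connect the two lemmas.
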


We are now ready to prove the main theorem.

\noindent
{\bf Proof of Theorem \ref{main}.} If $e(G)=n-1$, then $G$ is a tree.  We have  $q(G)=n$ and $R(G)\geq \sqrt{n-1}$ by Theorem \ref{ThBE}.
Thus,  $\tfrac{q(G)}{R(G)}\leq\frac{n}{\sqrt{n-1}}$. If $e(G)=n$, then  Theorem \ref{ThGL} implies $R(G)\geq R(S^{*}_n)=\tfrac{n-3}{\sqrt{n-1}}+\sqrt{\tfrac{2}{n-1}}+\tfrac{1}{2} > \sqrt{n-1}+\tfrac{2}{n\sqrt{n-1}}$ when $n\geq 12$.  By Lemma \ref{LeBasic}, we have  $\tfrac{q(G)}{R(G)} < \tfrac{n}{\sqrt{n-1}}$. For $n=12$,  we note $\tfrac{12}{\sqrt{11}}<\tfrac{11}{3}$.   For the rest of the proof,  we  assume  $e(G)=n+k$ with $k\geq 1$.  We first prove the second part of the theorem, namely, $n \geq 13$. We shall consider the following three cases depending on the range of $e(G)$.

\vspace{0.1cm}
\noindent
{\bf Case 1:} $e(G) \geq 2n^{3/2}$.  We get $\tfrac{q(G)}{R(G)}<\tfrac{n}{\sqrt{n-1}}$ by  Lemma \ref{dense}.

\vspace{0.1cm}
\noindent
{\bf Case 2:} $n+11 \leq e(G) \leq  \min\{2n^{3/2},\binom{n}{2}\}$. In this case,  $\tfrac{q(G)}{R(G)}<\tfrac{n}{\sqrt{n-1}}$ is given by Lemma \ref{1314} and Lemma \ref{reduce}.

\vspace{0.1cm}
\noindent
{\bf Case 3:} $n+1 \leq e(G) \leq n+10$.   We consider the following subcases depending on $\Delta(G)$.  We claim  $\tfrac{q(G)}{R(G)}<\tfrac{n}{\sqrt{n-1}}$ for each subcase.

\vspace{0.1cm}
{\bf Subcase 3.1:}  $\Delta(G)=n-1$.  Part 1 of Lemma \ref{induction} proves the claim.

\vspace{0.1cm}
{\bf Subcase 3.2:}  $\Delta(G)=n-2$.   The case of  $n+1 \leq e(G) \leq n+4$ is proved by  Part 3 of Lemma \ref{largedegree} and the case of  $n+5 \leq e(G) \leq n+10$ is proved by  Part 2 of Lemma \ref{induction}.

\vspace{0.1cm}
{\bf Subcase 3.3:}  $\Delta(G)=n-3$.  Part 2 of Lemma \ref{largedegree} proves the case of $n+1 \leq e(G) \leq n+7$ and Part 3 of Lemma \ref{induction} prove the case of  $n+8 \leq e(G) \leq n+10$.

\vspace{0.1cm}
{\bf Subcase 3.4:}  $n/2 \leq \Delta(G) \leq n-4$. Part 1 of Lemma \ref{largedegree} implies the claim.

\vspace{0.1cm}
{\bf Subcase 3.5:} $\Delta(G) < n/2$.   Lemma \ref{small}  gives us the claim.

From the argument for $ e(G) \geq n$ and $n \geq 13$, we get $\tfrac{q(G)}{R(G)}<\tfrac{n}{\sqrt{n-1}}$ when $e(G) \geq n$. Therefore,   $\tfrac{q(G)}{R(G)}=\tfrac{n}{\sqrt{n-1}}$ can only occur  for $e(G)=n-1$.  By Theorem \ref{ThBE}, we get the equality holds if and only if $G$ is a star when $n \geq 13$.

We are left with the case where $n=12$.  We shall use the function $g(m)$ from the proof of Theorem 10 in \cite{DengBalAyy}. Specialized to $n=12$, we get
 \[
 g(m)=\frac{\left(\tfrac{2m}{11}+10\right)\sqrt{2m-11}}{m}.
 \]
Let $m=e(G)$. With the help of computer, we get $g(m) < \tfrac{11}{3}$ for $21 \leq m \leq  \binom{12}{2}-1=65$ and $g(66) =\tfrac{11}{3}$. Equivalently, $\tfrac{q(G)}{R(G)}<\tfrac{11}{3}$ when $21 \leq m \leq 65$. We need only to prove the case of   $m=12+k$ for $1 \leq k \leq 8$.  Recall Lemmas \ref{LeBasic}, \ref{small}, \ref{n121}, \ref{n122}. Repeating the case analysis above, we  can show   $\tfrac{q(G)}{R(G)}< \tfrac{12}{\sqrt{11}}<\tfrac{11}{3}$ when $13 \leq e(G) \leq 20$. We already proved  $\tfrac{q(G)}{R(G)}<\tfrac{11}{3}$ when $e(G) \in \{n-1,n\}$. Therefore,  $\tfrac{q(G)}{R(G)}=\tfrac{11}{3}$ may hold only for $e(G)=66$, which turns out to be true because $G=K_{12}$.

We have  completed the proof of the main theorem.

\end{document}